\newcommand{\pb}{{\boldsymbol{p}}}
\newcommand{\qb}{{\boldsymbol{q}}}
\newcommand{\bv}{{\boldsymbol{v}}}
\newcommand{\C}{{\mathbb C}}
\newcommand{\D}{{\mathbb D}}
\newcommand{\N}{{\mathbb N}}
\newcommand{\R}{{\mathbb R}}
\newcommand{\T}{{\mathbb T}}
\newcommand{\Z}{{\mathbb Z}}
\theoremstyle{plain}
\newtheorem{Theorem}{Theorem}[section]
\newtheorem{Lemma}[Theorem]{Lemma}
\newtheorem{Proposition}[Theorem]{Proposition}
\theoremstyle{definition}
\newtheorem{Definition}[Theorem]{Definition}
\newtheorem{Example}[Theorem]{Example}
\journal{Journal of Approximation Theory}
\begin{document}
\begin{frontmatter}

\title{System theory and orthogonal multi-wavelets}

\author[MariaAddress]{Maria Charina
\corref{mycorrespondingauthor}}
\cortext[mycorrespondingauthor]{Corresponding author}
\ead{maria.charina@univie.ac.at}
\address[MariaAddress]{Fakult\"at f\"ur
	Mathematik, University of Vienna, 1090 Vienna, Austria}

\author[CostanzaAddress]{Costanza Conti}
	\ead{costanza.conti@unifi.it}
	\address[CostanzaAddress]{DIEF, Universit\`a di Firenze, Viale Morgagni 40/44, 50134 Firenze, Italy}

\author[MariantoAddress]{Mariantonia Cotronei}
\ead{mariantonia.cotronei@unirc.it}
\address[MariantoAddress]{DIIES, Universit\`a Mediterranea di Reggio Calabria, Via Graziella, loc. Feo di Vito, 89122 Reggio Calabria, Italy}

\author[MihaiAddress]{Mihai Putinar}
\ead{mputinar@math.ucsb.edu}
\address[MihaiAddress]{University of California at Santa Barbara, USA, and University of Newcastle,
	Newcastle upon Tyne, UK}

\begin{abstract}
In this paper we provide a complete and unifying characterization of compactly supported univariate scalar  orthogonal  wavelets and vector-valued or matrix-valued orthogonal multi-wavelets.
This characterization is based on classical results from system theory and basic linear algebra. In particular, we show that the corresponding wavelet and multi-wavelet masks are  identified with a transfer function $$
 F(z)=A+B z (I-Dz)^{-1} \, C, \quad  z \in \D=\{z \in \C \ : \ |z| < 1\},
$$
of a conservative linear system. The complex matrices $A,\ B, \ C, \ D$ define a block circulant unitary matrix. Our results show that there are no intrinsic differences  between the elegant wavelet construction by Daubechies or any other construction of vector-valued  or matrix-valued multi-wavelets. The structure of the unitary matrix defined by $A,\ B, \ C, \ D$ allows us to
parametrize in a systematic way all classes of possible wavelet and multi-wavelet masks together with the masks of the corresponding refinable functions.
\end{abstract}


\begin{keyword}
	Quadrature mirror filters \sep Unitary  Extension Principle \sep Transfer function \sep Wavelets
	\MSC[2010]  65T60  \sep  11E25 \sep  	65D99
\end{keyword}

\end{frontmatter}

\section{Introduction and notation}  \label{sec:intro_notation}

There has been a multitude of results on orthogonal wavelet and multi-wavelet constructions and on the characterization of the corresponding filterbank systems, since the pioneering work \cite{Daub_paper}. Our goal is to unify all those approaches. Indeed we show that there are no intrinsic differences between the elegant construction of wavelets by Daubechies, in the scalar case, or any other construction of vector or matrix-valued
multi-wavelets.
In particular, we
compare our results with the recent characterization of orthogonal multi-wavelets in \cite{AlpayJorgensen}.

By \cite{RS95}, the constructions of compactly supported multi-wavelets via the Unitary Extension Principle  boil down to manipulations with certain trigonometric polynomials on the unit circle
$$
 \T=\{z \in \C \ : \ |z|=1\}.
$$
Any such construction requires that the underlying orthogonal scaling function satisfies
\begin{equation} \label{eq:refinement}
 \phi: \R \rightarrow \C^{K \times M}, \quad \phi=\sum_{\alpha \in \Z} \phi(2\cdot-\alpha) p_\alpha,
 \quad K,M \in \N, \quad K \le M,
\end{equation}
with the mask $\pb=\{p_\alpha \in \C^{M \times M} \ : \ \alpha  \in \{0,\ldots, n\} \}$, $n \in \N$.
The corresponding wavelet or multi-wavelet is defined by
$$
 \psi:\R \rightarrow \C^{K \times M}, \quad \psi=\sum_{\alpha \in \Z} \phi(2\cdot-\alpha) q_\alpha
$$
with the finitely supported mask $\qb=\{q_\alpha \in \C^{M \times M} \ : \ \alpha \in \{0,\ldots, n\}\}$, $n \in \N$,

By \cite{CollelaHeil, DL1992}, to ensure the existence of the compactly supported distributional solution
of (\ref{eq:refinement}),  we require that for the symbol
$$
  p(z)=\sum_{\alpha \in \Z} p_\alpha z^\alpha, \quad z \in \C \setminus \{0\},
$$
there exist $\tilde{M} \le M$ vectors $\bv_1,\ldots, \bv_{\tilde{M}} \in \R^M$ satisfying
\begin{equation} \label{sum_rules1}
 p(1) \bv_j=\bv_j \quad \hbox{and} \quad p(-1) \bv_j =0, \quad j=1, \ldots, \tilde{M}.
\end{equation}
Additionally, the other eigenvalues of $p(1)$ should be in the absolute values less than $1$.
In this case, we say that $\pb$ satisfies sum rules of  order $1$. Sum rules of order $1$ imply that
the associated multi-wavelet mask possesses a discrete vanishing moment
\begin{equation} \label{eq:van_moment}
q(1)^* \bv_j=0,\quad j=1, \ldots, \tilde{M},
\end{equation}
with the same vectors $\bv_j$ are as in \eqref{sum_rules1}.
 In the literature, the cases $\tilde{M}=1$ and $\tilde{M}=M$ are
called the $1$-rank and the full rank cases, respectively
\cite{CollelaHeil, Conti_Cotronei, MS}. The higher smoothness of $\phi$ imposes additional sum rule conditions
on the symbol $p(z)$, see e.g. \cite{Charina, Han, JetterPlonka}.

%
%
%
In the scalar or full rank cases (i.e. $\tilde{M}=M$), the sum rules of order $\ell+1$ are equivalent to
the existence of the factor $(1+z)^\ell$ in $p(z)$. The vanishing moment conditions
of order $\ell+1$ in these cases guarantee the existence of the factors $(1-z)^\ell$ in $q(z)$.

In the scalar case ($K=M=1$), the  wavelet construction by Daubechies \cite{Daub_ten_lectures} amounts
to defining the wavelet mask $\qb$ by
\begin{equation} \label{flip_trick}
 q_\alpha=(-1)^\alpha \, p_{n-\alpha}, \quad \alpha \in \{0, \ldots, n\}.
\end{equation}
In the case $M>1$, due to the non-commutativity of the matrices $p_\alpha$ and $q_\alpha$, the trick in \eqref{flip_trick}
does not apply.  Nevertheless, the interest in constructing multi-wavelets has not
decreased for the last $30$ years and it is motivated, for example, by the fact that the growth of the support of $\phi$ in this case is
decoupled from the smoothness of $\phi$ and symmetry does not conflict with orthogonality \cite{CollelaHeil}.

The constructions of the corresponding matrix-valued masks $\pb$ and $\qb$ are
based on the so-called QMF (quadrature mirror filter) and UEP (Unitary Extension Principle) conditions.
To state them, we define the matrix polynomial map
\begin{equation}\label{def:F}
 F: \C \rightarrow \C^{2M \times 2M}, \quad F(\xi)=\sum_{j=0}^N F_{j} \xi^{j}, \quad \xi=z^2,  \quad
 N=\lfloor \frac{n}{2} \rfloor,
\end{equation}
with the matrix coefficients $F_{j} \in  \C^{2M \times 2M}$, $M \in \N$,
$$
 F_{j}= \sqrt{2}\left(\begin{array}{cc}p_{2j} & q_{2j} \\ p_{2j+1} & q_{2j+1} \end{array}\right), \quad j=0,\ldots,N.
$$
The entries in the first column of $F$ are usually called the polyphase components of $p(z)$.
The QMF-condition states that
\begin{equation} \label{def:QMF}
 I-F^*(\xi) F(\xi)=0, \quad \xi \in \T,
\end{equation}
and, equivalently, the UEP-conditions are
\begin{equation} \label{def:UEP}
 I-F(\xi) F^*(\xi)=0, \quad \xi \in \T.
\end{equation}

To use classical results from the theory of linear systems, we look at $F$ in \eqref{def:F}
as a holomorphic function on the unit disk
$$
 \D=\{\xi \in \C \ : \ |\xi|<1\}.
$$
The QMF-condition \eqref{def:QMF} and the maximum principle imply that $F(\xi)$ is contractive for any
$\xi \in \D$.
Such matrix valued inner functions can be interpreted as transfer functions of conservative linear control systems; specifically it means that the representation
\begin{equation} \label{transfer_function}
 F(\xi)=A+B \xi (I-D\xi)^{-1} \, C, \quad  \xi \in \D,
\end{equation}
holds, with the $(2M+2MN)\times (2M+2MN)$ unitary matrix
\begin{equation} \label{def:partABCD}
 \left( \begin{array}{cc} A & B \\ C &D \end{array}\right)
\end{equation}
which we shall call the $ABCD$ matrix.
For possible further use in the present wavelet framework we refer for a proof and details to the mathematical article [5], which even treats realization theory in the case of several complex variables.


Note that the identity in \eqref{transfer_function} can be equivalently written as
\begin{eqnarray} \label{transfer_function_system}
\begin{array}{l}
  F(\xi)=A+\xi\, B  \, \ell_N(\xi),  \\ \noalign{\medskip}
 \ell_N(\xi)= C+ \xi \, D \, \ell_N(\xi),
 \end{array} \quad \xi \in \D.
\end{eqnarray}
This system of equations plays an important role in constructions of appropriate blocks of the $ABCD$ matrix.

\medskip The paper is organized as follows: In subsection \ref{sec:structure_lN} we discuss the structure of $\ell_N$ appearing in the transfer function system \eqref{transfer_function_system}. In subsection \ref{sec:structure_ABCD}, we provide the explicit form of the $ABCD$ matrix under the assumptions that $F$ satisfies
QMF-conditions on $\T$, see Theorem \ref{th:main}.
The compact support of the constructed multi-wavelets is ensured by the property $B\, D^N=0$, see
Proposition \ref{prop:BD}. The constructions of several compactly supported scaling functions and multi-wavelets are given in Section \ref{sec:examples}. In subsection \ref{sec:Potapov}, we compare our results with the characterization in
\cite{AlpayJorgensen}. The characterization in
\cite{AlpayJorgensen}  makes use of the so-called Potapov-Blaschke-products and is also valid for rational $F$.

We remark that although we prove our results for the case of dilation $2$, they all can be generalized in a natural way
 to the case of a general dilation factor, since this  mainly affects the dimensions of the matrices $F_j$ in (5).

\section{Characterization of orthogonal univariate multi-wavelets} \label{sec:main_uni}

The main goal of this section is to provide the explicit form of the $ABCD$ matrix  in \eqref{def:partABCD}
for all $F$ that satisfy  the QMF-condition \eqref{def:QMF} or, equivalently, the UEP-condition \eqref{def:UEP}.
We start by deriving the structure of $\ell_N$ in  \eqref{transfer_function_system}. Then we determine the
explicit structure of the $ABCD$ matrix (see Definition \ref{def:ABCD} and Theorem \ref{th:main}).
Further properties of the matrix $ABCD$ are studied in subsection \ref{sec:further_prop_ABCD}.

\subsection{Structure of $\ell_N$} \label{sec:structure_lN}

In order to  derive the structure of $\ell_N$ (see Theorem \ref{th:form_of_l1}), we  make use of the following straightforward observation.

\begin{Proposition} \label{prop:equivalent_form_QMF}
 The QMF-condition \eqref{def:QMF} is equivalent to the identity
 \begin{equation} \label{eq:aux}
 I-\left(\sum_{k=1}^{N} \left(\sum_{ {i,j \in \{0,\ldots,N\} } \atop -i+j=-k} F_{i}^* F_j \right) \bar{\eta}^k + \sum_{k=0}^{N} \left(\sum_{ {i,j \in \{0,\ldots,N\} } \atop -i+j=k} F_{i}^* F_j \right) \xi^k\right)=0
 \end{equation}
for $\xi, \eta \in \D$.

\end{Proposition}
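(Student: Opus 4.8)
The plan is to reduce both sides of the asserted equivalence to one and the same algebraic condition on the finitely many coefficient matrices of the ``autocorrelation'' of $F$, and to pass between the circle $\T$ and the bidisc by a one-line substitution. First I would plug $F(\xi)=\sum_{j=0}^{N}F_j\xi^j$ into the left-hand side of \eqref{def:QMF}. On $\T$ one has $\bar\xi=\xi^{-1}$, so $F^*(\xi)=\sum_{i=0}^{N}F_i^*\xi^{-i}$ and therefore
\[
  F^*(\xi)F(\xi)=\sum_{i,j=0}^{N}F_i^*F_j\,\xi^{-i+j}=\sum_{k=-N}^{N}c_k\,\xi^{k},
  \qquad c_k:=\!\!\sum_{{i,j\in\{0,\ldots,N\}}\atop{-i+j=k}}\!\!F_i^*F_j .
\]
This already displays the two bracketed sums of \eqref{eq:aux}: the frequencies $k\ge 0$ will be carried by $\xi$ and the frequencies $k<0$ by a second, independent variable. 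It then suffices to show that each of ``\eqref{def:QMF} holds on $\T$'' and ``\eqref{eq:aux} holds for all $\xi,\eta\in\D$'' is equivalent to the statement that $c_0=I$ and $c_k=0$ for all $k\ne 0$.

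For the implication \eqref{def:QMF} $\Rightarrow$ \eqref{eq:aux} I would note that $\xi\mapsto I-F^*(\xi)F(\xi)=I-\sum_{k=-N}^{N}c_k\xi^k$ is, entrywise, a Laurent polynomial vanishing at the infinitely many points of $\T$; multiplying by $\xi^{N}$ gives an ordinary polynomial vanishing at infinitely many points, hence identically zero, so comparing coefficients of the monomials $\xi^k$ yields $c_0=I$ and $c_k=0$ for $k\ne 0$. Substituting this back, the bracket in \eqref{eq:aux} collapses to $c_0\xi^0=I$ and \eqref{eq:aux} holds for all $\xi,\eta\in\D$ (read for $\eta\ne 0$, so that the formal monomials $\bar\eta^{k}$ with $k<0$ are defined; they carry coefficient $0$ in any case).

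For the converse I would specialise \eqref{eq:aux} to $\eta=\bar\xi$ with $\xi\in\D\setminus\{0\}$, which is legitimate since $|\bar\xi|=|\xi|<1$. Then $\bar\eta^{k}=\xi^{k}$ for every $k$, the two sums merge, and \eqref{eq:aux} becomes $\sum_{k=-N}^{N}c_k\xi^k=I$ for all $\xi\in\D\setminus\{0\}$; multiplying by $\xi^N$ and invoking once more that a matrix polynomial vanishing at infinitely many points is identically zero, this extends to $\C\setminus\{0\}$ and in particular to $\T$, which is exactly \eqref{def:QMF}. Since $F$ is square, $F^*F=I\Leftrightarrow FF^*=I$, so the equivalence with the UEP-condition \eqref{def:UEP} comes for free and needs no separate treatment.

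I do not anticipate a genuine obstacle: the proposition is in essence a rewriting. The only points that require care are the bookkeeping of which index pairs $(i,j)\in\{0,\ldots,N\}^2$ contribute to $c_k$ (recorded by the constraint $-i+j=k$ in \eqref{eq:aux}), the use of $\bar\xi=\xi^{-1}$ on $\T$, and the convention that in \eqref{eq:aux} the variable $\eta$ is taken in $\D\setminus\{0\}$. As a consistency check --- and as the reason it is natural to keep $\xi$ and $\eta$ separate in what follows --- one should observe that $c_{-k}=c_k^*$, so the $k<0$ block of \eqref{eq:aux} is the adjoint of its $k>0$ block.
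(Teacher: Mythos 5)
Your proof is correct and follows essentially the same route as the paper: both arguments reduce the QMF-condition and the identity \eqref{eq:aux} to the single statement that the Laurent coefficients $c_k$ of $F^*(\xi)F(\xi)$ satisfy $c_0=I$ and $c_k=0$ for $k\neq 0$, using that a (Laurent) polynomial vanishing on infinitely many points vanishes identically. Your write-up is in fact somewhat more careful than the paper's (which compresses the converse into the terse substitution ``$\bar\eta=\eta^{-1}$''), but there is no substantive difference in method.
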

\begin{proof}
 Note that the QMF-condition \eqref{def:QMF} is equivalent to
 \begin{equation} \label{eq:aux1}
  I-\sum_{k=-N}^{N} \left(\sum_{ {i,j \in \{0,\ldots,N\} } \atop -i+j=k} F_{i}^* F_j \right)
  \xi^k=0, \quad \xi \in \T,
 \end{equation}
 i.e. all the coefficients of the above Laurent-polynomial are equal to zero. This implies \eqref{eq:aux}
for all $\xi, \eta \in \D$. Conversely, if \eqref{eq:aux} is satisfied, then, setting $\bar{\eta}=\xi^{-1}$,
we obtain \eqref{eq:aux1} .
\end{proof}

The following result is an important step for determining the structure of the $ABCD$ matrix in
\eqref{def:partABCD}.

\begin{Theorem} \label{th:form_of_l1}
 The polynomial map $F$ satisfies the QMF-condition \eqref{def:QMF} if and only if
 \begin{equation} \label{eq:aux3}
  I-F^*(\eta) F(\xi)=(1- \bar{\eta}\xi)\ell_N^{*}(\eta) \ell_N(\xi), \quad \xi, \eta \in \D,
 \end{equation}
with
\begin{equation} \label{def:lN}
 \ell_N(\xi)=\left(\begin{array}{ccccc}F_1 & F_2& \dots & F_{N-1} & F_N \\ F_2 &F_3 & \dots &F_N & 0\\ \vdots & & & & \vdots \\ F_N & 0& \dots &0 & 0 \end{array}\right)
 \left(\begin{array}{c}1\\ \xi \\ \vdots \\ \xi^{N-1}\end{array}\right).
\end{equation}
\end{Theorem}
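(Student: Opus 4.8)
The plan is to prove the equivalence \eqref{eq:aux3} by matching Taylor coefficients in $\xi$ and $\bar\eta$ on both sides, using Proposition~\ref{prop:equivalent_form_QMF} as the bridge. First I would expand the left-hand side $I-F^*(\eta)F(\xi)$ directly from the definition $F(\xi)=\sum_{j=0}^N F_j\xi^j$, obtaining
$$
 I-F^*(\eta)F(\xi)=I-\sum_{i,j=0}^N F_i^*F_j\,\bar\eta^{\,i}\xi^j .
$$
Separately, I would expand the right-hand side: writing $\ell_N(\xi)=\sum_{m=0}^{N-1} L_m\xi^m$, where $L_m$ is the $m$-th block-row of the Hankel-type matrix in \eqref{def:lN} (so $L_m=(F_{m+1},F_{m+2},\dots,F_N,0,\dots,0)^{\!\top}$ read as a column of blocks, i.e. the $(r)$-block entry of $L_m$ is $F_{m+r}$ for $m+r\le N$ and $0$ otherwise), we get
$$
 (1-\xi\bar\eta)\,\ell_N^*(\eta)\ell_N(\xi)=\sum_{m,m'=0}^{N-1}\bar\eta^{\,m'}\xi^{m}\,L_{m'}^*L_m-\sum_{m,m'=0}^{N-1}\bar\eta^{\,m'+1}\xi^{m+1}\,L_{m'}^*L_m .
$$

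Next I would compute $L_{m'}^*L_m=\sum_{r\ge 1}(L_{m'})_r^*(L_m)_r=\sum_{r\ge 1}F_{m'+r}^*F_{m+r}$, the sum running over $r$ with $m'+r\le N$ and $m+r\le N$. The key combinatorial identity to verify is then that, after the telescoping cancellation coming from the factor $(1-\xi\bar\eta)$, the coefficient of $\bar\eta^{\,i}\xi^{j}$ on the right-hand side equals exactly $\sum_{a,b\in\{0,\dots,N\},\,b-a=j-i}F_a^*F_b$ for $(i,j)\neq(0,0)$, and the $(0,0)$-coefficient produces the $I$ term together with $\sum_a F_a^*F_a$. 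Concretely, the coefficient of $\bar\eta^{\,i}\xi^{j}$ in $\ell_N^*(\eta)\ell_N(\xi)$ is $\sum_{r\ge1}F_{i+r}^*F_{j+r}$ (with $i+r\le N$, $j+r\le N$), while in $\xi\bar\eta\,\ell_N^*(\eta)\ell_N(\xi)$ it is $\sum_{r\ge1}F_{i-1+r}^*F_{j-1+r}$; the difference telescopes to pick up precisely the term $F_i^*F_j$ when $i,j\ge 1$, and the boundary cases $i=0$ or $j=0$ are handled by inspection. This shows the right-hand side of \eqref{eq:aux3} equals $I-\sum_{i,j=0}^N F_i^*F_j\bar\eta^{\,i}\xi^j$ if and only if the bracketed double sum in \eqref{eq:aux} vanishes, which by Proposition~\ref{prop:equivalent_form_QMF} is exactly the QMF-condition. (Setting $k=j-i$ reorganizes the double sum $\sum_{i,j}F_i^*F_j\bar\eta^i\xi^j$ restricted to the diagonal-shifted form appearing in \eqref{eq:aux} once one substitutes $\bar\eta^i\xi^j=\bar\eta^{i-\min(i,j)}\xi^{j-\min(i,j)}\cdot(\xi\bar\eta)^{\min(i,j)}$ and uses that on $\T$, or after setting $\bar\eta=\eta^{-1}$, only the difference $k=j-i$ survives; for the disk version one keeps both variables as in Proposition~\ref{prop:equivalent_form_QMF}.)

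For the direction QMF $\Rightarrow$ \eqref{eq:aux3}: assuming \eqref{def:QMF}, Proposition~\ref{prop:equivalent_form_QMF} gives \eqref{eq:aux}, hence all the matched coefficients agree and \eqref{eq:aux3} holds for all $\xi,\eta\in\D$ by the coefficient computation above. Conversely, if \eqref{eq:aux3} holds, then restricting to $\eta=\xi\in\T$ (so $1-\xi\bar\eta=0$) immediately yields $I-F^*(\xi)F(\xi)=0$ on $\T$, which is \eqref{def:QMF}. This converse direction is in fact the easy half.

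The main obstacle is the bookkeeping in the Hankel/telescoping step: one must be careful that the block structure of $\ell_N$ in \eqref{def:lN} is indexed so that $(\ell_N^*(\eta)\ell_N(\xi))$ reproduces exactly the shifted correlation sums $\sum_{-i+j=k}F_i^*F_j$ with the correct truncation at $N$, and that the subtraction of $\xi\bar\eta\,\ell_N^*\ell_N$ kills precisely the "interior" overlap so that only the full correlation sum $\sum_{i,j=0}^N F_i^*F_j\bar\eta^i\xi^j$ remains. I would organize this by introducing the block row vectors $L_m$ explicitly and verifying the identity $L_{m'}^*L_m-L_{m'-1}^*L_{m-1}=F_{m'}^*F_m$ (for $m,m'\ge 1$, with the convention $L_{-1}=0$ absorbing the boundary), which is the heart of the matter; everything else is reindexing. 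Once this identity is in hand, summing against $\bar\eta^{m'}\xi^{m}$ and comparing with the expansion of $I-F^*(\eta)F(\xi)$ finishes the proof.
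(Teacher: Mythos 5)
Your plan is correct in substance but takes a genuinely different route from the paper. The paper proves the direction QMF $\Rightarrow$ \eqref{eq:aux3} by induction on $N$: it peels off the degree-$N$ terms $F_N^*\bar\eta^N F(\xi)$ and $F^*(\eta)F_N\xi^N$, applies the inductive hypothesis to the truncated polynomial, and reassembles $\ell_N$ from $\ell_{N-1}$ using binomial expansions of $(1-\bar\eta^j\xi^j)$. You instead compare coefficients of $\bar\eta^i\xi^j$ directly, which is arguably more transparent: writing $\ell_N(\xi)=\sum_m L_m\xi^m$, the factor $(1-\xi\bar\eta)$ produces the telescoping differences $L_i^*L_j-L_{i-1}^*L_{j-1}$, and the Hankel structure of \eqref{def:lN} makes these collapse to a single product $F_i^*F_j$ for all \emph{interior} indices $i,j\ge1$ with no use of the QMF-condition at all; the QMF-condition (in the form of Proposition \ref{prop:equivalent_form_QMF}) is needed precisely and only for the boundary coefficients with $i=0$ or $j=0$, where $L_{i-1}$ or $L_{j-1}$ vanishes and the correlation sums $\sum_{-a+b=k}F_a^*F_b$ must be invoked. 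This cleanly isolates where the hypothesis enters, which the induction obscures. Your converse (set $\eta=\xi\in\T$, justified because both sides are polynomials in $\xi$ and $\bar\eta$) is essentially the paper's substitution $\bar\eta=\eta^{-1}$.

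Two bookkeeping points to fix in the execution. First, your key identity has the wrong sign: with your conventions one computes $L_{i}^*L_j-L_{i-1}^*L_{j-1}=F_{i+N}^*F_{j+N}-F_i^*F_j=-F_i^*F_j$ for $i,j\ge1$, which is what you actually need since the left-hand side of \eqref{eq:aux3} contributes $-F_i^*F_j$ as the coefficient of $\bar\eta^i\xi^j$. Second, your interim claim that the right-hand-side coefficient of $\bar\eta^i\xi^j$ equals the full correlation sum $\sum_{b-a=j-i}F_a^*F_b$ is not literally correct for interior $(i,j)$ (there it is the single term $-F_i^*F_j$, independently of QMF); only on the boundary $i=0$ or $j=0$ does the truncated sum $\sum_{r\ge1}F_r^*F_{j+r}$ appear and get identified with $-F_0^*F_j$ via QMF. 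Neither issue affects the viability of the argument.
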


The proof of Theorem \ref{th:form_of_l1} for general $N \in \N$ is rather technical, thus, we first present the idea of the proof
on the example of the case $N=1$.

\begin{Example}
Let $\xi, \eta \in \D$. For $N=1$, due to \eqref{def:F}, we have
\begin{eqnarray} \label{eq:induction_start}
	I-F^*(\eta)F(\xi)&=&I-F_0^* F_0-F_0^*F_1\xi-F_1^*F_0\bar{\eta}-F_1^* F_1\bar{\eta}\xi \notag \\
	&=&
	I-\left(F_0^* F_0+F_1^* F_1\right)-F_0^*F_1\xi-F_1^*F_0\bar{\eta} \notag \\&+&
	(1-\bar{\eta}\xi)F_1^* F_1.
	\end{eqnarray}
The QMF-condition \eqref{def:QMF}, by Proposition \ref{prop:equivalent_form_QMF}, imply
$$
  I-\left(F_0^* F_0+F_1^* F_1\right)-F_0^*F_1\xi-F_1^*F_0\bar{\eta}=0,
$$
thus, yielding
$$
 I-F^*(\eta)F(\xi)=(1-\bar{\eta}\xi)\ell_1^*(\eta) \ell_1(\xi) \quad \hbox{with} \quad \ell_1(\xi)=F_1.
$$	
\end{Example}

\begin{proof}[Proof of Theorem \ref{th:form_of_l1}.] The proof of  ''$\Longrightarrow$``
is by induction on $N$. Let $\xi, \eta \in \D$.
The starting point of the inductive argument is given in \eqref{eq:induction_start}. For general
$N \in \N$, we need to show that
\begin{eqnarray*}
	I-F^*(\eta)F(\xi)&=&I-\left(\sum_{k=1}^{N} \left(\sum_{ {i,j \in \{0,\ldots,N\} } \atop -i+j=-k} F_{i}^* F_j \right) \bar{\eta}^k + \sum_{k=0}^{N} \left(\sum_{ {i,j \in \{0,\ldots,N\} } \atop -i+j=k} F_{i}^* F_j \right) \xi^k\right)\\ &&+(1-\bar{\eta} \xi) \ell_{N}^*(\eta)\ell_{N}(\xi).
\end{eqnarray*}
Then the QMF-condition, by Proposition \ref{prop:equivalent_form_QMF}, implies that
 \begin{eqnarray*}
	I-F^*(\eta)F(\xi)=(1-\bar{\eta} \xi) \ell_{N}^*(\eta)\ell_{N}(\xi).
\end{eqnarray*}
We start by writing
	\begin{eqnarray*}
		I-F^*(\eta)F(\xi)&=&I-\sum_{i=0}^{N-1} F^*_{i} \bar{\eta}^{i}\sum_{j=0}^{N-1} F_{j} \xi^{j}-F_N^* \bar{\eta}^N \sum_{j=0}^{N-1} F_{j} \xi^{j}\\&&-\sum_{i=0}^{N-1} F^*_{i} \bar{\eta}^{j}
		F_N \xi^N -F_N^* F_N \bar{\eta}^N \xi^N.
	\end{eqnarray*}
By the induction assumption
\begin{eqnarray*}
\lefteqn{I-\sum_{i=0}^{N-1} F^*_{i} \bar{\eta}^{i}\sum_{j=0}^{N-1} F_{j} \xi^{j}}\\
&=& I-\left(\sum_{k=1}^{N-1} \left(\sum_{ {i,j \in \{0,\ldots,N-1\} } \atop -i+j=-k} F_{i}^* F_j \right) \bar{\eta}^k + \sum_{k=0}^{N-1} \left(\sum_{ {i,j \in \{0,\ldots,N-1\} } \atop -i+j=k} F_{i}^* F_j \right) \xi^k\right) \\
&& +(1-\bar{\eta}\xi) \ell_{N-1}^*(\eta) \ell_{N-1}(\xi).
\end{eqnarray*}
Next observe that
\begin{eqnarray*}
	\lefteqn{-F_N^*\bar{\eta}^N \sum_{j=0}^{N-1} F_{j} \xi^{j}-\sum_{i=0}^{N-1} F^*_{i} \bar{\eta}^{j}
	F_N \xi^N - F_N^* F_N \bar{\eta}^N \xi^N} \\
	&&= -\sum_{j=0}^{N-1} \left(F^*_N F_j \bar{\eta}^{N}\xi^j + F^*_j F_N \bar{\eta}^{j}\xi^N \right) -F_N^*F_N \bar{\eta}^N \xi^N\\\\ &&= \sum_{j=1}^{N-1} (1-\bar{\eta}^j \xi^j)\left(F^*_N F_j \bar{\eta}^{N-j} + F^*_j F_N \xi^{N-j}\right)- \sum_{j=0}^{N-1} \left(F^*_N F_j \bar{\eta}^{N-j} + F^*_j F_N \xi^{N-j}\right)+\\
	&& (1-\bar{\eta}^N \xi^N)F_N^* F_N -F_N^*F_N.
\end{eqnarray*}
Thus, we get
\begin{eqnarray*}
	I-F^*(\eta)F(\xi)&=&I-\left(\sum_{k=1}^{N} \left(\sum_{ {i,j \in \{0,\ldots,N\} } \atop -i+j=-k} F_{i}^* F_j \right) \bar{\eta}^k + \sum_{k=0}^{N} \left(\sum_{ {i,j \in \{0,\ldots,N\} } \atop -i+j=k} F_{i}^* F_j \right) \xi^k\right)\\ &&+(1-\bar{\eta} \xi) \ell_{N-1}^*(\eta)\ell_{N-1}(\xi) \\
	&&+ \sum_{j=1}^{N-1} (1-\bar{\eta}^j \xi^j)\left(F^*_N F_j \bar{\eta}^{N-j} + F^*_j F_N \xi^{N-j}\right)+(1-\bar{\eta}^N \xi^N)F_N^* F_N.
\end{eqnarray*}
The QMF-condition, due to Proposition \ref{prop:equivalent_form_QMF}, and the geometric sum
argument for $(1-\bar{\eta}^j \xi^j)$ lead to
\begin{eqnarray*}
	I-F^*(\eta)F(\xi)&=&(1-\bar{\eta} \xi) \Big( \ell_{N-1}^*(\eta)\ell_{N-1}(\xi) \\&&+
	\sum_{j=1}^{N-1} \left(\sum_{k=0}^{j-1} \left(\bar{\eta} \xi\right)^{j-1-k}\right) \left(F^*_N F_j \bar{\eta}^{N-j} + F^*_j F_N \xi^{N-j}\right) \\
	&& + \sum_{k=0}^{N-1} \left(\bar{\eta} \xi\right)^{N-1-k}F^*_N F_N \Big).
\end{eqnarray*}
By the definition of $\ell_{N-1}$, we have
$$
  \ell_{N-1}^*(\eta)\ell_{N-1}(\xi)=\sum_{i=1}^{N-1} \left(  1  \ \dots \ \bar{\eta}^{N-1-i}\right) \left( \begin{array}{c} F_i^* \\ \vdots \\ F_{N-1}^*\end{array}\right) \left( \begin{array}{ccc} F_i & \dots & F_{N-1}\end{array}\right)
 \left( \begin{array}{c} 1 \\ \vdots \\ \xi^{N-1-i}\end{array}\right).
$$
Note that, for $i=1, \ldots, N-1$,
\begin{eqnarray*}
\lefteqn{\left( \begin{array}{ccc} 1 & \dots& \bar{\eta}^{N-i}\end{array}\right) \left( \begin{array}{c} F_i^* \\ \vdots \\ F_{N}^*\end{array}\right) \left( \begin{array}{ccc} F_i & \dots & F_{N}\end{array}\right)
\left( \begin{array}{c} 1 \\ \vdots \\ \xi^{N-i}\end{array}\right)}\\
&=&\left( \begin{array}{ccc} 1 & \dots& \bar{\eta}^{N-i}\end{array}\right) \left( \begin{array}{c} F_i^* \\ \vdots \\ F_{N-1}^* \\ 0\end{array}\right) \left( \begin{array}{cccc} F_i & \dots & F_{N-1} & 0\end{array}\right)
\left( \begin{array}{c} 1 \\ \vdots \\ \xi^{N-i}\end{array}\right)\\
&& 	+\sum_{j=i}^{N-1} \left(\bar{\eta} \xi\right)^{j-i} \left(F^*_N F_j \bar{\eta}^{N-j} + F^*_j F_N \xi^{N-j}\right)+\left(\bar{\eta} \xi \right)^{N-i} F_N^* F_N
\end{eqnarray*}
and the reordering of the summands leads to
\begin{eqnarray*}
 \lefteqn{\sum_{j=1}^{N-1} \left(\sum_{k=0}^{j-1} \left(\bar{\eta} \xi\right)^{j-1-k}\right) \left(F^*_N F_j \bar{\eta}^{N-j} + F^*_j F_N \xi^{N-j}\right)}  \\
 &=&\sum_{i=1}^{N-1} \left(\sum_{j=i}^{N-1} \left(\bar{\eta} \xi\right)^{j-i}\right) \left(F^*_N F_j \bar{\eta}^{N-j} + F^*_j F_N \xi^{N-j}\right).
\end{eqnarray*}
Therefore, we obtain  \eqref{eq:aux3}
with $\ell_N$ in \eqref{def:lN}. The proof of ''$\Longleftarrow$`` follows by substituting
$\eta = \xi$ in  \eqref{eq:aux3} and since $\bar \xi=\xi^{-1}$ for $\xi \in {\mathbb T}$.
\end{proof}

\subsection{Structure of the $ABCD$ matrix} \label{sec:structure_ABCD}
The main result of this section characterizes all orthogonal wavelets and multi-wavelets in terms of
transfer function representations for the analytic map $F:\D \rightarrow \C^{2M \times 2M}$ defined in \eqref{def:F}. Such representations involve certain complex matrices,
which we define next.

\begin{Definition} \label{def:ABCD}
For $F_j \in \C^{2M \times 2M}$, $j=0, \ldots, N$, in \eqref{def:F}, define
the $2M(N+1) \times 2M(N+1)$ block matrix
\begin{eqnarray*}
\left(\begin{array}{ccc} A &\vline& B \\ \hline C& \vline &D\end{array} \right)=
\left(\begin{array}{cccccc} F_0 &\vline& F_N & \dots &\dots &F_1  \\ \hline
F_1 & \vline & F_0 & F_N &  & \vdots \\ \vdots & \vline & F_1 & \ddots & \ddots & \vdots\\
\vdots & \vline &  & \ddots & \ddots &F_N \\
F_{N} & \vline & F_{N-1} & \dots &F_1 & F_0
\end{array} \right) \cdot \left(\begin{array}{ccc} I &\vline& 0 \\ \hline 0& \vline &U\end{array} \right),
\end{eqnarray*}
where the $2MN \times 2MN$ matrix $U$ is given by
\begin{eqnarray*}
U&=&\left(\begin{array}{cccc} F_0^*+F_N^* & F_1^* &  \dots & F_{N-1}^*\\
F_{N-1}^* & \ddots & \ddots & \vdots \\
\vdots & \ddots & \ddots &F_1^*  \\ F_1^* & \dots & F_{N-1}^* & F_0^*+F_N^*\end{array}\right).
\end{eqnarray*}
\end{Definition}
The proof of our main result, Theorem \ref{th:main}, relies on the unitary property of the matrix $U$
in Definition \ref{def:ABCD}.

\begin{Example} In the case $N=1$, it is easy to check that the QMF-condition  \eqref{def:QMF} implies that the matrix $U$ in Definition \ref{def:ABCD} is unitary
$$
 UU^*=(F_0^*+F_1^*)(F_0+F_1)=F_0^*F_0+F_1^*F_1=I,
$$
where we used that $F_1^*F_0=F_0^*F_1=0$.
The case $N=2$ illustrates the idea of the proof  of the unitary property of $U$ in the general case (see
Proposition \ref{prop:U_unitary}). Assume that the QMF-condition is satisfied. Let $I_{2M}$ be the $2M \times 2M$ identity matrix. Then writing (using the circulant structure of $U$)
\begin{eqnarray*}
U=\left(\begin{array}{cc}F_0^*+F_2^*&0\\0&F_0^*+F_2^*\end{array}\right)
+\left(\begin{array}{cc}F_1^*&0\\0&F_1^* \end{array}\right) \left(\begin{array}{cc}0&I_{2M}\\I_{2M}&0 \end{array}\right)
\end{eqnarray*}
we get, using $F_2^*F_0=F_0^*F_2=0$,
\begin{eqnarray} \label{eq:unitaryUN_2_2}
 UU^*&=&\left(\begin{array}{cc}F_0^*F_0+F_2^*F_2&0\\0&F_0^*F_0+F_2^*F_2\end{array}\right) \notag \\&+&
 \left(\begin{array}{cc} 0&F_1^*F_2+F_2^*F_1\\F_1^*F_2+F_2^*F_1&0\end{array}\right)+
 \left(\begin{array}{cc}F_1^*F_1&0\\0&F_1^*F_1\end{array}\right).
\end{eqnarray}
Thus, the rest of the QMF-conditions imply that $UU^*=I_{4M}$.
\end{Example}


\begin{Proposition} \label{prop:U_unitary}
If the polynomial map $F$ in \eqref{def:F} satisfies the QMF-condition  \eqref{def:QMF}, then the matrix $U$
from Definition \ref{def:ABCD} is unitary.
\end{Proposition}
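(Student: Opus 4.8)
The plan is to recognize $U$ as a \emph{block-circulant} matrix and then invoke the standard fact that such matrices are block-diagonalized by the discrete Fourier transform, so that unitarity of $U$ reduces to unitarity of its Fourier symbol at the $N$-th roots of unity. Concretely, one reads off from Definition~\ref{def:ABCD} that $U$ is the $N\times N$ block array with $2M\times 2M$ blocks $U_{jk}=c_{(k-j)\bmod N}$, where $c_0=F_0^*+F_N^*$ and $c_m=F_m^*$ for $m=1,\ldots,N-1$: the diagonal blocks are $F_0^*+F_N^*$, the block just above the diagonal is $F_1^*$, the block just below the diagonal (wrapping around to the top-right corner) is $F_{N-1}^*$, and so on. Thus $U=\big(\mathcal{F}_N\otimes I_{2M}\big)^*\,\mathrm{diag}\big(C(\zeta):\zeta^N=1\big)\,\big(\mathcal{F}_N\otimes I_{2M}\big)$, where $\mathcal{F}_N$ is the unitary $N$-point DFT matrix and $C(\zeta):=\sum_{m=0}^{N-1}c_m\zeta^m$. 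Since the DFT is unitary, $U$ is unitary if and only if $C(\zeta)$ is unitary for every $N$-th root of unity $\zeta$.

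Next I would identify this symbol with $F$ at roots of unity. Because $\zeta^N=1$ whenever $\zeta$ is an $N$-th root of unity, the ``aliased'' block $F_0^*+F_N^*$ equals $F_0^*\zeta^0+F_N^*\zeta^N$, so that $C(\zeta)=\sum_{m=0}^{N}F_m^*\zeta^m$; and since $|\zeta|=1$ this equals $\big(\sum_{m=0}^{N}F_m\bar\zeta^m\big)^*=F(\bar\zeta)^*$, the conjugate transpose of $F$ evaluated at the point $\bar\zeta\in\T$. With this identification the claim follows at once from the hypothesis: for any $N$-th root of unity $\zeta$ we have $\bar\zeta\in\T$, so the QMF-condition \eqref{def:QMF} gives $I-F(\bar\zeta)^*F(\bar\zeta)=0$; as $F(\bar\zeta)$ is a square matrix, this forces $F(\bar\zeta)$ to be invertible with inverse $F(\bar\zeta)^*$, hence $F(\bar\zeta)$ — and therefore its adjoint $C(\zeta)=F(\bar\zeta)^*$ — is unitary. (This is exactly the equivalence of \eqref{def:QMF} and \eqref{def:UEP}.) By the reduction of the previous paragraph, $U$ is unitary.

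The only step that requires genuine care is the bookkeeping that turns the combined entry $F_0^*+F_N^*$ on the diagonal of $U$ into the clean identity $C(\zeta)=F(\bar\zeta)^*$ on the $N$-th roots of unity; once that aliasing is handled correctly, nothing beyond the DFT diagonalization of block-circulant matrices and the QMF-condition is needed. If one prefers to avoid the Fourier language, one may instead verify $UU^*=I$ directly: the $(j,k)$ block of $UU^*$ is the cyclic autocorrelation $\sum_{m=0}^{N-1}c_m\,c_{(m+j-k)\bmod N}^*$, whose discrete Fourier transform in the index $j-k$ is $C(\zeta)C(\zeta)^*$, which is identically $I$ by the same computation; this is merely a repackaging of the symbol-wise argument and is less transparent, so I would present the Fourier version.
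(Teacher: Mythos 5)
Your argument is correct, and its point of departure --- reading $U$ off Definition~\ref{def:ABCD} as a block-circulant matrix --- is exactly the observation on which the paper's own proof rests: the paper writes $U=(F_0^*+F_N^*)P^0+\sum_{j=1}^{N-1}F_{N-j}^*P^j$ for the cyclic block shift $P$ and then simply declares that ``the claim follows by straightforward computations.'' What you do differently is actually supply that computation, and in arguably the cleanest form: conjugating by the block DFT reduces unitarity of $U$ to unitarity of the symbol $C(\zeta)=\sum_{m=0}^{N-1}c_m\zeta^m$ at the $N$-th roots of unity, and the aliasing identity $F_0^*+F_N^*=F_0^*\zeta^0+F_N^*\zeta^N$ turns $C(\zeta)$ into $F(\bar\zeta)^*$, so the QMF-condition \eqref{def:QMF} at the points $\bar\zeta\in\T$, together with squareness (which upgrades the one-sided identity to two-sided, i.e.\ the equivalence of \eqref{def:QMF} and \eqref{def:UEP}), finishes the proof. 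The completion the paper presumably has in mind is instead a direct entrywise verification of $U^*U=I$ from the coefficient identities of Proposition~\ref{prop:equivalent_form_QMF}; your symbol-level route costs the standard fact about DFT diagonalization of block circulants (valid even for non-commuting blocks, since only the shift matrices need to be simultaneously diagonalized) but buys the conceptual statement that $U$ is unitarily equivalent to $\mathrm{diag}\bigl(F(\bar\zeta)^*:\zeta^N=1\bigr)$, which makes transparent why only the values of $F$ at roots of unity enter. The one convention-dependent detail --- whether the conjugation produces $C(\zeta)$ or $C(\bar\zeta)$ on the diagonal --- is immaterial, as both families are indexed by the same set of roots of unity. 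I see no gaps.
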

 \begin{proof} For the $2M \times 2M$ identity matrix $I_{2M}$, define
$$
 P:=\left(\begin{array}{ccccc} 0 & \dots & 0 & I_{2M} \\ I_{2M} & \dots & 0 & 0 \\ \vdots & \ddots & \vdots &  \vdots\\
0 & \dots & I_{2M} & 0\end{array} \right) \in \R^{2MN \times 2MN}.$$
Note that, similarly to the standard definition of circulant matrices,
$$
 U=(I_N \otimes (F_0^*+F_N^*)) P^0+\sum_{j=1}^{N-1} (I_N \otimes F^*_{N-j})\, P^j.
$$
where $P^0$ is the $2MN \times 2MN$ identity matrix. Analogously,
$$
 U^*=(I_N \otimes (F_0+F_N))P^0+\displaystyle{\sum_{j=1}^{N-1} (I_N \otimes F_{j}})\, P^j.
$$
Using these representations of $U$ and of $U^*$ and the fact that $P^{N+k}=P^k$, $k=0,\dots,N-1$, similarly
to \eqref{eq:unitaryUN_2_2}, we get that the product $UU^*$ contains
$\displaystyle \sum_{j=0}^N F_j^*F_j=I_{2M}$ on its main diagonal and other (zero) QMF-conditions on its subdiagonals.
Thus, the claim follows.
\end{proof}

We are finally ready to state the following characterization of all compactly
supported orthogonal wavelet and multi-wavelet masks.

\begin{Theorem} \label{th:main}
Let $F$ be a polynomial map in \eqref{def:F}. The map $F$ satisfies the QMF-condition
 \eqref{def:QMF}  if and only if $F$ satisfies
 \begin{equation}\label{eq:ABCDrepresentation_of_F}
 F(\xi)=A+B\xi(I-D\xi)^{-1} C, \quad \xi \in \D,
 \end{equation}
with the unitary map
\begin{equation} \label{eq:ABCD}
 \left( \begin{array}{cc} A & B \\ C &D \end{array}\right) : \begin{array}{lll} \C^{2M} & & \C^{2M} \\ \oplus & \rightarrow & \oplus
 \\ \C^{2MN} & & \C^{2MN}
 \end{array}
\end{equation}
given in Definition \ref{def:ABCD}.
\end{Theorem}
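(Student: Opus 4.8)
The plan is to verify the transfer-function identity \eqref{eq:ABCDrepresentation_of_F} directly from the explicit block matrices in Definition \ref{def:ABCD}, using Theorem \ref{th:form_of_l1} as the key input, and then to establish unitarity of the $ABCD$ matrix separately.

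First I would prove the ``$\Longrightarrow$'' direction. Assume $F$ satisfies the QMF-condition. Recall that \eqref{transfer_function_system} shows $F(\xi)=A+B\xi(I-D\xi)^{-1}C = A + \xi B \ell_N(\xi)$ where $\ell_N(\xi) = C + \xi D \ell_N(\xi)$, i.e. $\ell_N(\xi) = (I-D\xi)^{-1}C$. So the task is to check that with $A,B,C,D$ as in Definition \ref{def:ABCD}, the resolvent expansion $\ell_N(\xi) = \sum_{j\ge 0} D^j C \xi^j$ reproduces exactly the column vector of polynomials in \eqref{def:lN} of Theorem \ref{th:form_of_l1}, and that $A + \xi B\ell_N(\xi) = F(\xi)$. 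Here I would exploit the block-circulant structure: write the big left factor in Definition \ref{def:ABCD} as a block-circulant matrix generated by $F_0, F_1, \ldots, F_N$ (with $P$ the cyclic shift as in the proof of Proposition \ref{prop:U_unitary}), so that $C$ is its first block-column below $A$, $D$ is the lower-right $(N\times N)$-block part composed with $U$, and $B$ is the top block-row to the right of $A$. Then one computes $D^jC$ block by block; the nilpotent-like truncation coming from the zeros in \eqref{def:lN} should emerge because the block-circulant times $U$ collapses appropriately. The cleanest route is probably to verify the two equations of \eqref{transfer_function_system} directly: set $L(\xi)$ equal to the column of polynomials appearing in \eqref{def:lN}, and check (i) $L(\xi) = C + \xi D L(\xi)$ and (ii) $A + \xi B L(\xi) = F(\xi)$, both of which reduce to finite matrix identities among the $F_j$ and $F_j^*$ that follow from the QMF-condition in the form \eqref{eq:aux1}, i.e. from $\sum_{-i+j=k} F_i^* F_j = \delta_{k0} I$. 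In particular, the appearance of $U$ built from $F_j^*$ and of the combination $F_0^* + F_N^*$ is exactly what is needed for these telescoping identities to close up — this is where the QMF relations get used.

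For the converse ``$\Longleftarrow$'', I would argue as follows: if $F$ admits the representation \eqref{eq:ABCDrepresentation_of_F} with a \emph{unitary} $ABCD$ matrix, then by the standard Schur/de Branges--Rovnyak computation (the algebraic identity underlying \eqref{transfer_function}), one gets $I - F^*(\eta)F(\xi) = (1-\xi\bar\eta)\,\ell_N^*(\eta)\ell_N(\xi)$ on $\D$, where $\ell_N(\xi) = (I-D\xi)^{-1}C$; restricting to $\xi=\eta\in\T$ gives $I - F^*(\xi)F(\xi)=0$, which is \eqref{def:QMF}. Alternatively, and more in the spirit of the paper, one can simply invoke Theorem \ref{th:form_of_l1}: the representation forces $\ell_N$ to have the polynomial form in \eqref{def:lN}, hence \eqref{eq:aux3} holds, hence the QMF-condition holds after the substitution $\bar\eta = \eta^{-1}$.

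The remaining ingredient is that the $ABCD$ matrix of Definition \ref{def:ABCD} is unitary. I would factor it as (block-circulant)$\cdot\mathrm{diag}(I,U)$. Proposition \ref{prop:U_unitary} already gives that $U$ is unitary, so $\mathrm{diag}(I,U)$ is unitary, and it suffices to show the full block-circulant matrix generated by $F_0,\ldots,F_N$ (of size $2M(N+1)$) is unitary. A block-circulant matrix is unitary iff each of its ``symbol'' values $\widehat F(\omega) = \sum_{j=0}^{N} F_j \omega^j$ at the $(N+1)$-st roots of unity $\omega$ is unitary; but on $\T$ we only know $F(\xi) = \sum_j F_j \xi^2{}^{j}$ (note $\xi = z^2$ there) is unitary by the QMF/UEP conditions \eqref{def:QMF}–\eqref{def:UEP}, and the roots of unity at which we need unitarity are values of $\xi$, so this is exactly evaluating $F$ on a finite subset of $\T$ — hence unitary. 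I expect this step, reconciling the ``symbol of the block-circulant'' with the map $F$ and getting the bookkeeping of the shift $P$, the truncation in \eqref{def:lN}, and the role of $U$ all consistent, to be the main obstacle; the algebra is routine but the indices are delicate, and care is needed that the product (block-circulant)$\cdot\mathrm{diag}(I,U)$ — not the block-circulant alone — is what has the clean transfer-function interpretation while unitarity is inherited from both factors being unitary.
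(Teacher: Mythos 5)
Your proposal is correct and, for the core equivalence, follows the same route as the paper: the forward direction is proved by verifying the two equations of \eqref{transfer_function_system} directly for the explicit polynomial $\ell_N$ of Theorem \ref{th:form_of_l1} (your ``cleanest route'' (i)--(ii) is exactly the paper's reduction to \eqref{system_for_ABCD_1}--\eqref{system_for_ABCD_3}, closed up by the QMF/UEP identities $\sum_{-i+j=k}F_i^*F_j=\delta_{k0}I$), and the converse is the standard unitary-colligation kernel identity combined with Theorem \ref{th:form_of_l1}, which is precisely the paper's appeal to Agler. Where you genuinely diverge is the unitarity of the $ABCD$ matrix: the paper disposes of this in one sentence (``the special choice of the matrices, Proposition \ref{prop:U_unitary} and the hypothesis imply\ldots''), whereas you factor $ABCD$ as the block circulant generated by $F_0,\ldots,F_N$ times $\mathrm{diag}(I,U)$ and diagonalize the circulant by the block Fourier matrix, reducing its unitarity to unitarity of $F(\omega)=\sum_j F_j\omega^j$ at the $(N+1)$-st roots of unity --- which is immediate from the QMF-condition since these roots lie on $\T$ (your worry about $\xi=z^2$ is moot, as \eqref{def:QMF} is stated directly in the variable $\xi$). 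This is a clean and correct justification of a step the paper leaves essentially unproved, and it buys a conceptual explanation of \emph{why} the circulant structure appears; the paper's version buys brevity at the cost of opacity. I see no gap: your preferred verification of $L(\xi)=C+\xi DL(\xi)$ for the explicit polynomial $L$ sidesteps the only delicate point (one would otherwise need $D^NC=0$ to truncate the Neumann series), and the boundary restriction $\xi=\eta\in\T$ in the converse is harmless because $\ell_N$ is a polynomial.
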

\begin{proof}
 The proof of ''$\Longrightarrow$`` consists of two parts. Firstly, note that the
special choice of the matrices $A$, $B$, $C$, $D$, Proposition \ref{prop:U_unitary}  and the
hypothesis imply that the matrix   in \eqref{eq:ABCD} is indeed unitary.
Next, we show that $F$ satisfies \eqref{eq:ABCDrepresentation_of_F}. Let $\xi \in \D$. By \cite{BT}, the identity in
\eqref{eq:ABCDrepresentation_of_F} is equivalent to
the system of equations
\begin{equation} \label{system_for_ABCD_1}
\begin{array}{l}
 A+\xi\, B  \, \ell_N(\xi)=F(\xi) \\ \noalign{\medskip}
 C+ \xi \, D \, \ell_N(\xi) = \ell_N(\xi),
 \end{array}
\end{equation}
with $\ell_N$ as in Theorem \ref{th:form_of_l1}. By the definitions of the matrices $A$, $C$ and the polynomial
map $\ell_N$, the system in \eqref{system_for_ABCD_1} is equivalent to
\begin{eqnarray} \label{system_for_ABCD_2}
 \begin{array}{l}
 \xi\, B  \, \ell_N(\xi) =F_1 \xi + \ldots F_N \xi^N= \left( \begin{array}{ccc} F_1 & \dots F_N \end{array}\right) \left( \begin{array}{c} \xi \\ \vdots \\ \xi^N \end{array}\right)   \\\noalign{\medskip}
 \xi \, D \, \ell_N(\xi)  = \ell_N(\xi)- C = \left( \begin{array}{cccc} F_2 & \dots & F_N & 0
 \\ \vdots  & & & \vdots\\ F_N & & & \\ 0 & \dots &0&0  \end{array}\right) \left( \begin{array}{c} \xi \\ \vdots \\ \xi^N \end{array}\right).
 \end{array}
\end{eqnarray}
After the division of both sides of \eqref{system_for_ABCD_2} by $\xi$ and
by  $U^*U=I$, we get another equivalent system
\begin{eqnarray} \label{system_for_ABCD_3}
\begin{array}{l}
	 \Big( B -
	\left( \begin{array}{cccc} F_0+F_N & F_{N-1} & \dots & F_1 \end{array}\right) U\Big) \,
	\ell_N(\xi)=0  \\	\noalign{\medskip}
	 \Big( D -
	\left( \begin{array}{cccc} F_1& F_0+F_N &  \dots & F_2 \\
		\vdots & \vdots& \ddots & \vdots \\ F_{N-1} & F_{N-2} & \dots & F_0+F_N \\
		0 & 0 & \dots & 0 \end{array}\right) U\Big) \,
	\ell_N(\xi)=0.
	\end{array}
\end{eqnarray}
Observe that the QMF-conditions yield
\begin{eqnarray} \label{system_for_ABCD_4}
	U\ell_N(\xi) &=& \Big( \left( \begin{array}{ccc} F_N^*&   \dots & 0\\
		\vdots  & \ddots & \vdots \\ F_{1}^*  & \dots & F_N^*\end{array}\right) +
	\left( \begin{array}{ccc} F_0^*&   \dots & F_{N-1}^*  \\
		\vdots &  \ddots & \vdots \\ 0 &  \dots & F_0^*  \end{array}\right)\Big)
	\ell_N(\xi) \notag \\&=&
	\left( \begin{array}{ccc} F_N^*&   \dots & 0\\
		\vdots  & \ddots & \vdots \\ F_{1}^*  & \dots & F_N^*\end{array}\right) \ell_N(\xi)	
\end{eqnarray}
Thus,
due to $F_0F_N^*=0$ and \eqref{system_for_ABCD_4}, the first identity in \eqref{system_for_ABCD_3} is
satisfied for $B$ in Definition \ref{def:ABCD}
\begin{eqnarray*} \label{system_for_ABCD_5}
\lefteqn{- \left( \begin{array}{cccc} F_0 & 0 & \dots & 0 \end{array}\right) U \,
\ell_N(\xi)}\\
&=&  - \left( \begin{array}{cccc} F_0 & 0 & \dots & 0 \end{array}\right)
\left( \begin{array}{ccc} F_N^*&   \dots & 0\\
	\vdots  & \ddots & \vdots \\ F_{1}^*  & \dots & F_N^*\end{array}\right) \,
\ell_N(\xi)=0.\\
\end{eqnarray*}
The rest of the QMF- and UEP-conditions and \eqref{system_for_ABCD_4} imply that the second identity in \eqref{system_for_ABCD_3} is satisfied for $D$ in Definition \ref{def:ABCD}
\begin{eqnarray*} \label{system_for_ABCD_6}
&&  \Big( D -
\left( \begin{array}{cccc} F_1& F_0+F_N &  \dots & F_2 \\
	\vdots & \vdots& \ddots & \vdots \\ F_{N-1} & F_{N-2} & \dots & F_0+F_N \\
	0 & 0 & \dots & 0 \end{array}\right) U\Big) \,
\ell_N(\xi) =\\ &&\Big( \left( \begin{array}{cccc} F_0& 0&  \dots & 0\\
	F_1&F_0 &\dots&0 \\
\vdots & \vdots & \ddots & \vdots \\ F_{N-1}  &F_{N-2}& \dots & F_0\end{array}\right) -
\left( \begin{array}{cccc} F_1& F_0&  \dots & 0 \\
\vdots & \vdots& \ddots & \vdots \\ F_{N-1} & F_{N-2} & \dots & F_0 \\
0 & 0 & \dots & 0 \end{array}\right)\Big)U \,
\ell_N(\xi)=0.
\end{eqnarray*}

The proof of  ''$\Longleftarrow$`` follows by a linear algebra argument. Namely, \eqref{eq:ABCDrepresentation_of_F},
in its equivalent form
$$
 \left( \begin{array}{c} F(\xi) \\ \ell_N(\xi)\end{array} \right)= \left( \begin{array}{cc} A&B \\ C&D\end{array} \right)  \left( \begin{array}{c} I \\ \xi \ell_N(\xi)\end{array} \right), \quad \ell_N(\xi) = (I-\xi D)^{-1}C, \quad \xi \in \D,
$$
and the fact that $ABCD$ is a unitary matrix is reflected in the conservation law
\begin{equation} \label{eq:conservation_law}
 \|F(\xi)\|^2 + \|\ell_N(\xi)\|^2 =1 + \| \xi \ell_N(\xi)\|^2, \quad \xi \in \D.
\end{equation}
Note that the matrix $D$ is contractive, so $\ell_N(\xi)$ is a rational function,
analytic in the unit disk $\D$. Let $z$ be a point on the unit torus $\T$ which is not a pole of $\ell_N$. Passing to the limit $\xi \rightarrow z$ in the identity \eqref{eq:conservation_law}, we obtain
$$
1 = \| F(z)\|^2 \quad  \hbox{for every $z \in \T$ which is not a pole of $\ell_N$}.
$$
Recall that $F$ is assumed to be a polynomial map, hence
$$
1 = \| F(z)\|^2 \quad \hbox{for all} \quad z \in \T,
$$
i.e. the matrix $F(z)$ is unitary for all $z \in \T$.
\end{proof}

\subsection{Further properties of the $ABCD$ matrix} \label{sec:further_prop_ABCD}

In this section, we analyze the properties of the matrices $B$ and $D$ that guarantee that the
representation in \eqref{eq:ABCDrepresentation_of_F} leads to a polynomial $F$.

\begin{Proposition} \label{prop:BD}
If $F$ of degree $N$ in \eqref{def:F} satisfies QMF-condition \eqref{def:QMF},
then the matrices $B$ and $D$ from Definition \ref{def:ABCD} satisfy
$$
 B \, D^N=0.
$$
\end{Proposition}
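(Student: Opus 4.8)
The plan is to compute the matrix product $B\,D^N$ directly from the block structure given in Definition \ref{def:ABCD} and exploit the fact that the right factor in that definition is block upper-triangular-like after multiplying by $U$. First I would observe that, from Definition \ref{def:ABCD}, the matrix $B$ is the first block row $\left(\begin{array}{cccc} F_N & F_{N-1} & \dots & F_1\end{array}\right)$ (multiplied by $U$ on the right), while $D$ is the $2MN\times 2MN$ block matrix obtained from the lower-right $N\times N$ block array of the big circulant-type matrix, again multiplied by $U$ on the right. Since $U$ is unitary by Proposition \ref{prop:U_unitary}, and in particular invertible, it is cleaner to conjugate: write $B = \tilde B\,U$ and $D = U^{*}\,(U D)$, or more simply note $D = \tilde D\,U$ where $\tilde D$ is the block matrix with $(i,j)$ entry built from the $F_k$'s, so that $D^N = \tilde D\,U\,\tilde D\,U\cdots = \tilde D\,(U\tilde D)^{N-1}U$. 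The key will be to understand the nilpotency-type structure of the relevant block matrices.

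The main step is to identify the "shift" structure. The block matrix appearing inside $D$ (before multiplication by $U$) is, up to the top row being zero, the lower-triangular Toeplitz block matrix with $F_0$ on the diagonal, $F_1$ below, etc.; its last block row is zero. I expect that the right combination — namely $D$ together with $B$ applied on the left — behaves like a nilpotent shift of a definite length. Concretely, I would show by induction on $k$ that $B\,D^{k}$ has a specific sparse block-row form (for instance, that $B\,D^{k}$ has its first few block columns equal to zero, the number growing with $k$), using at each step the QMF/UEP relations $\sum_i F_i^* F_i = I - (\text{cross terms})$ and, crucially, the computation already carried out in the proof of Theorem \ref{th:main}, where it was shown that $U\,\ell_N(\xi) = \left(\begin{array}{ccc} F_N^* & \dots & 0 \\ \vdots & \ddots & \vdots \\ F_1^* & \dots & F_N^*\end{array}\right)\ell_N(\xi)$, i.e. that composing $U$ with the relevant Toeplitz block matrix collapses it to the strictly "anti-triangular" Hankel-type matrix. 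Iterating this collapse $N$ times should annihilate $\ell_N(\xi)$, and hence $B\,D^N$ on the range spanned by the columns $(1,\xi,\dots,\xi^{N-1})^\top$; since that range is all of $\C^{2MN}$ as $\xi$ varies (Vandermonde), we conclude $B\,D^N=0$.

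Alternatively, and perhaps more transparently, I would argue at the level of the transfer function itself. By Theorem \ref{th:main}, $F(\xi) = A + B\xi(I-D\xi)^{-1}C = A + \sum_{k\ge 0} B D^k C\,\xi^{k+1}$, and this power series must terminate because $F$ is a polynomial of degree $N$ in $\xi$. Hence $B D^k C = 0$ for all $k \ge N$. This gives $B D^N C = 0$ for free, but to upgrade "$B D^N C = 0$" to "$B D^N = 0$" one needs a reachability/observability argument: the pair $(D,C)$ should be controllable in the sense that the columns of $C, DC, D^2C,\dots,D^{N-1}C$ span $\C^{2MN}$, so that $B D^N (C\ DC\ \cdots\ D^{N-1}C) = 0$ forces $B D^N = 0$ after noting $B D^{N+j} C = 0$ too. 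I would verify the span condition from the explicit form of $C$ and $D$: $C$ is the block column $(F_0,F_1,\dots,F_N)^\top$ composed with $U$ — wait, rather $C$ is the lower-left block of the big matrix times the identity, i.e. the block column $(F_1, F_2,\dots,F_N)^\top$ — and applying $D$ repeatedly shifts and fills, giving a block triangular system of full rank generically, hence always (since rank is a closed condition and the identity is preserved by the QMF structure).

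\textbf{Main obstacle.} The hard part will be making the "collapse" rigorous: tracking exactly how multiplication by $U$ converts the lower-triangular block Toeplitz matrix inside $D$ into the strictly anti-triangular Hankel matrix, and then showing that raising the resulting object to the $N$-th power (interleaved with copies of $U$) annihilates everything — essentially a careful bookkeeping of block indices, closely parallel to the index reordering already done at the end of the proof of Theorem \ref{th:form_of_l1} and in the verification of \eqref{system_for_ABCD_3}. The controllability route has the advantage of reducing this to a single rank statement about $(C,D)$, which may be checked by exhibiting the triangular structure of $(C\ DC\ \cdots\ D^{N-1}C)$, but one must still confirm that the QMF hypothesis guarantees the relevant diagonal blocks are invertible; I would handle this by noting that $F_0 = A$ restricted appropriately is a contraction that is in fact unitary on the relevant subspace by the QMF relations at specific points, or simply by observing that the claim $B D^N = 0$ is equivalent to $F$ being a polynomial of degree $\le N$, which holds by construction of the $F_j$'s.
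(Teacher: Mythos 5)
Your first route is the paper's route in outline: the paper also writes $B=\tilde B\,U$, $D=\tilde D\,U$, reduces to $\tilde B(U\tilde D)^N=0$, and proves the shift identity $(0_{N-\ell-1}\ F_N\ \dots\ F_{N-\ell})\,U\tilde D=(0_{N-\ell}\ F_N\ \dots\ F_{N-\ell+1})$, so that each multiplication by $U\tilde D$ pushes the block row one slot to the right until it is annihilated. But you stop exactly where the work is. That identity is not mere bookkeeping: it is the only place the hypothesis enters, via the splitting $U=U_1+U_2$ and $\tilde D=\tilde D_1+\tilde D_2$ into triangular Toeplitz parts, with QMF giving $U_2\tilde D_1=0$ and UEP killing the $U_1$ contribution, after which one more application of QMF finishes the product with $\tilde D_2$. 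Also note that $\tilde D$ is the full $N\times N$ block circulant (no zero row); the matrix with a zero last block row that you describe is the auxiliary matrix from \eqref{system_for_ABCD_2}, not $\tilde D$. Your patch for concluding from the action on $\ell_N(\xi)$ is invalid: as $\xi$ varies, the ranges of $\ell_N(\xi)$ span only the column space of the block Hankel matrix in \eqref{def:lN}, not all of $\C^{2MN}$ (take $F=F_0+F_2\xi^2$ with $F_1=0$, which satisfies QMF, e.g. $F_0=\mathrm{diag}(1,0)$, $F_2=\mathrm{diag}(0,1)$; the Hankel matrix then has rank $2<4$). So $B D^N\ell_N(\xi)=0$ for all $\xi$ does not yield $B D^N=0$.

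Your second route has a genuine gap that cannot be repaired: the pair $(D,C)$ is \emph{not} controllable in general, so $B D^kC=0$ for $k\ge N$ does not upgrade to $B D^N=0$. Already for $N=1$ controllability would require $C=F_1$ to be invertible, and for the Daubechies D4 filter $\det F_1=-(p_0p_2+p_1p_3)=0$ by the very orthogonality relation $\sum_k p_kp_{k+2}=0$; rank deficiency of the $F_j$ is the rule, not the exception, under QMF. The supporting reasoning is also backwards: full rank is an \emph{open} condition, so "generically full rank, hence always, since rank is a closed condition" is false. Finally, $B D^N=0$ is not equivalent to $F$ being a polynomial of degree $\le N$; polynomiality only gives $B D^kC=0$ for $k\ge N$, which is strictly weaker. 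In short, the only viable path in your proposal is the first one, and its decisive computation is missing.
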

\begin{proof}	
 Using Definition \ref{def:ABCD}, we write $B=\tilde{B} \, U$ and $D=\tilde{D} \, U$. Note that, due to the invertibility of $U$, we only need to show that $\tilde{B} ( U \, \tilde{D})^N=0$.
To prove the claim, we show that
\begin{eqnarray*}
&&  \tilde{B} \, U  \, \tilde{D}=(F_N  \ \dots \ F_{2} \   F_{1} ) \, U \, \tilde{D}=(0 \ F_N  \ \dots \  F_{2}), \\
&&  \tilde{B} \, \left( U \, \tilde{D}\right)^2
= (0 \ F_N  \ \dots \ F_{2}) \, U \, \tilde  D=(0 \ 0 \ F_N  \ \dots \  F_{3})
\end{eqnarray*}
and so on until
$$
 \tilde{B} \, \left( U \, \tilde{D}\right)^N = (0 \ \ldots \ 0 \  \ F_N) \, U \, \tilde  D=(0 \ \ldots \ \ 0).
$$
First, note that, due to the structure of $U$ and $\tilde{D}$, we have
$$
 U = U_1+U_2:=\left( \begin{array}{cccc} F_0^* & F_1^*&  \dots & F_{N-1}^* \\
 	\vdots & \ddots& \ddots & \vdots \\ 0 &  & \ddots & F_1^* \\
 	0 & 0 & \dots & F_0^* \end{array}\right)+
   \left( \begin{array}{cccc} F_N^*&  \dots &  0 & 0\\ F_{N-1}^*& \ddots & & 0 \\
 	\vdots & \ddots & \ddots & \vdots \\ F_{1}^*  & \dots& F_{N-1}^* & F_N^*\end{array}\right)
 	$$
 and
 	$$
 \tilde{D}=\tilde{D}_1+\tilde{D}_2:= \left( \begin{array}{cccc} F_0&  \dots &  0 & 0\\ F_1& \ddots & & 0 \\
 	\vdots & \ddots & \ddots & \vdots \\ F_{N-1}  & \dots& F_{1} & F_0\end{array}\right)+
   \left( \begin{array}{cccc} 0 & F_N&  \dots & F_2 \\
 	\vdots & \ddots& \ddots & \vdots \\ 0 &  & \ddots & F_N \\
 	0 & 0 & \dots & 0 \end{array}\right).
$$	
For $\ell = N-1, \ldots, 0$,
by QMF-condition, we have $U_2 \tilde{D}_1=0$ and, by UEP-condition, $(0_{N-\ell-1} \  F_N  \ \dots \  F_{N-\ell} ) U_1=0$. Thus,
\begin{eqnarray*}
 	(0_{N-\ell-1} \  F_N  \ \dots \  F_{N-\ell} )\,  U  \tilde{D}&=&(0_{N-\ell-1} \  F_N  \ \dots \  F_{N-\ell}) \, U_2 \tilde{D}_2 \\
 	&=& \Big( \sum_{(i,j) \in \Gamma_{\ell,1,N}} F_i\, F_j^* \ \ldots \  \sum_{(i,j) \in \Gamma_{\ell,N,N}} F_i\, F_j^*\Big) \, \tilde{D}_2,
\end{eqnarray*}
where, for $k=1, \ldots, N$,
$$
 \Gamma_{\ell,k,N}=\{(i,j) \ : \ -i+j=\ell-N+k, \ i\in\{N-\ell, \ldots, N\}, \ j \in \{1, \ldots, N\}\}.
$$
The UEP-condition implies, for $k=1, \ldots, N$,
$$
 \sum_{(i,j) \in  \Gamma_{\ell,k,N}} F_i\, F_j^* =\left\{\begin{array}{cc}
  I- \displaystyle \sum_{(i,j) \in  \Lambda_{\ell,k,N}} F_i\, F_j^*, &  \hbox{if} \ \ k=N-\ell, \\ \\
   -\displaystyle \sum_{(i,j) \in \Lambda_{\ell,k,N}} F_i\, F_j^*, & \hbox{otherwise},
   \end{array}\right.
$$
where
$$
  \Lambda_{\ell,k,N}=\{(i,j) \ : \ -i+j=\ell-N+k, \ i\in\{N-\ell, \ldots, N\}, \ j \in \{0, \ldots, k-1\}\}.
$$
Therefore,
\begin{eqnarray*}
	\Big( \sum_{(i,j) \in \Gamma_{\ell,1,N}} F_i\, F_j^* \ \ldots \  \sum_{(i,j) \in \Gamma_{\ell,N,N}} F_i\, F_j^* \Big) &=&
	 (0_{N-\ell-1} \ I \ 0_{\ell}) \\ &-&\sum_{ N-\ell-k \ge 0 \atop k \in \{1,\ldots,N\} } \, F_{N-k-\ell}\, (0_{k-1}\ F_0^*\  \ldots \ F_{N-k}^*).
\end{eqnarray*}
Multiplication by $\tilde{D}_2$ of both sides of the above equation, due to QMF-condition, yields the claim.
\end{proof}

\section{Special case $N=1$} \label{sec:Potapov}

In this section, we consider the special situation of polynomials $F$ of degree $N=1$.   The following Lemma \ref{lem:N_1} is crucial for
comparison of Theorem \ref{th:main} with \cite[Theorem 3.1]{AlpayJorgensen} and also for our specific constructions in Section \ref{sec:examples}.

\begin{Lemma} \label{lem:N_1}
	Let $A,B,C,D$ be matrices in $\R^{2M\times 2M}$. The following  two sets of conditions  $(I)$ and $(II)$ are equivalent.
	
	\medskip \noindent
	
	\begin{description}
		\item[$(I)$]
		$\left\{
		\begin{array}{lll}
		&\hbox{The block matrix} \
		\left( \begin{array}{cc} A & B \\
		C &D \end{array} \right) \hbox{is unitary\ }, & (I.a)  \\
		\\
		&DC=0,\quad BC=C, &(I.b)\\
		\\
		&B=CU,\quad D=AU,\quad U=A^*+C^*. & (I.c)
		\end{array}\right.
		$
		
		\bigskip
		\bigskip
		\item[$(II)$]
		$\left\{
		\begin{array}{lll}
		&B^2=B,\quad B^*=B. & (II.a)  \\
		\\
		& D=I-B;&(II.b)\\
		\\
		&C=BU^*, \quad A=DU^*,\quad U=A^*+C^*.& (II.c)
		\end{array}\right.
		$
	\end{description}
\end{Lemma}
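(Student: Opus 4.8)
The plan is to prove the equivalence by establishing each implication separately, using the $N=1$ specialization of the structures in Definition \ref{def:ABCD}. When $N=1$ the matrices $A,B,C,D$ of Definition \ref{def:ABCD} reduce to $A=F_0$, $C=F_1$, and $U=F_0^*+F_1^*$ (the block-circulant collapses to a single block), with $B=F_1 U$ and $D=F_0 U$. Proposition \ref{prop:U_unitary} gives that $U$ is unitary whenever the QMF-condition holds, and for $N=1$ the QMF/UEP conditions are simply $F_0^*F_0+F_1^*F_1=I$ together with $F_0^*F_1=0$ (and their adjoint/transposed versions). So at bottom $(I)$ and $(II)$ are two bookkeeping-equivalent ways of recording ``$(A\ C)$ has orthonormal rows spanning a space whose complement is recorded by $D,B$'' — and the task is to make the translation explicit without invoking $F$-notation, working purely from the axioms in $(I)$ and $(II)$.

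First I would show $(I)\Rightarrow(II)$. Assuming $(I.a)$ unitarity and the relations $(I.c)$ with $U=A^*+C^*$, I compute $B=CU=C(A^*+C^*)=CA^*+CC^*$; using that the rows of $(A\ C)$ are orthonormal, i.e. $AA^*+BB^*$-type identities coming from $(I.a)$ — more precisely $AA^*+BB^*=I$ and, from the mixed blocks, $AC^*+BD^*=0$ — I want to extract $CA^*=0$ (equivalently the ``$BC=C$'' relation, suitably transposed) so that $B=CC^*$, which is manifestly self-adjoint and idempotent once one also knows $CC^*$ is a projection; the latter follows because $C$ has the property that $C^*C$ or $CC^*$ is idempotent, again read off from the orthonormality of the rows of $(A\ C)$ together with $(I.b)$. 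This gives $(II.a)$. Then $D=AU=A(A^*+C^*)=AA^*+AC^*$; combining $AA^*+BB^*=I$, $B=BB^*$ (from $(II.a)$), and $AC^*=-BD^*$ (from the off-diagonal block of unitarity) one peels off $D=I-B$, which is $(II.b)$. Finally $(II.c)$ is obtained by inverting $(I.c)$: since $U$ is unitary (it is $A^*+C^*$ and one checks $UU^*=I$ from $(I.b)$–$(I.c)$ and unitarity of the big matrix), $B=CU$ gives $C=BU^*$, and $D=AU$ gives $A=DU^*$, with the same $U=A^*+C^*$. The converse $(II)\Rightarrow(I)$ runs symmetrically: from $(II.a)$–$(II.b)$ one verifies directly that $\left(\begin{smallmatrix} A & B\\ C & D\end{smallmatrix}\right)$ with $A=DU^*=(I-B)U^*$ and $C=BU^*$ is unitary — this is a finite computation using $B^2=B=B^*$, $(I-B)$ also being an orthogonal projection, and $U$ unitary — which is $(I.a)$; the relations $DC=0$ and $BC=C$ of $(I.b)$ follow because $DC=(I-B)U^*BU^*$ and $BC=BU^*BU^*$ reduce, via $B(I-B)=0$ and $B^2=B$, to $0$ and $C$ respectively; and $(I.c)$ is again just the inversion of $(II.c)$ using unitarity of $U$.

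The one point that needs care, and which I expect to be the main obstacle, is the handling of the auxiliary matrix $U=A^*+C^*$: one must verify that $U$ is genuinely unitary under each hypothesis set (so that the relations $B=CU \Leftrightarrow C=BU^*$ and $D=AU\Leftrightarrow A=DU^*$ are legitimate two-way substitutions), and that the various ``cross'' identities — $CA^*=0$, $AC^*+BD^*=0$, $AA^*+BB^*=I$ — are correctly harvested from the $2M(N+1)\times 2M(N+1)$ unitarity condition specialized to $N=1$. Since the paper states the lemma for real matrices, all adjoints are transposes and no extra conjugation subtleties arise. I would organize the write-up so that the list of scalar/block identities following from unitarity of the big matrix is recorded once at the start, and then both implications are short deductions from that list together with $(I.b)$ (resp. $(II.a)$–$(II.b)$); everything else is routine matrix algebra that I would not spell out in full.
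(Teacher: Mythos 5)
Your plan follows essentially the same route as the paper's proof: both directions are handled by direct block computation, harvesting the unitarity identities $A^*A+C^*C=I$, $A^*B+C^*D=0$, $AC^*+BD^*=0$, etc., showing that $U=A^*+C^*$ is unitary so that $(I.c)$ and $(II.c)$ are mutually inverse substitutions, deducing $B^*=B=B^2$ and $D=I-B$ from these together with $(I.b)$, and proving $(II)\Rightarrow(I)$ by the routine verification you describe. The only inessential slip is the parenthetical claim that $CA^*=0$ is ``equivalently $BC=C$ transposed''; in fact $AC^*=0$ is obtained, much as in the paper, by combining $AC^*=DB^*$ (a consequence of $(I.c)$ and unitarity of $U$) with the off-diagonal unitarity relations and $DC=0$, but this detail does not alter the argument.
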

\begin{proof}
Note first that the conditions $(I.c)$ and $(II.c)$ are equivalent.

\noindent Assume that $(I)$ are satisfied. The $ABCD$ matrix is unitary, in
particular $A^* A +C^*C=I$ and $A^*B+C^*D=0$, and
$(II.c)$ imply that $U$ is unitary
$$
 UU^*=(A^*+C^*)(A+C)=A^*A+C^*C+A^*C+C^*A=I+(A^*B+C^*D)U^*=I.
$$
Next, $BC=C$ and $(II.c)$ yield
	$$
	B^2=B\, B=B\, (CU)=(BC) \, U=C\, U=B.
	$$
	The definitions of $A$ and $C$ in $(II.c)$ imply $AC^*=\frac{1}{2}AC^* + \frac{1}{2}DB^*$.
	Because $ABCD$ is unitary, in particular $AC^*+DB^*=0$, we get
	$$
	B^*=UC^*=(A+C)C^*=AC^*+CC^*=CC^*=B.
	$$
	By $(II.c)$ we obtain $U^*=A+C=(D+B)U^*$. Thus, $U^*U=I$,  leads to $D=I-B$.
	
\vspace{0.2cm}
	
\noindent Assume that $(II)$ are satisfied.	By $(II.a)$ and $(II.b)$, the matrix $D$ also satisfies
$D^2=D$ and $D^*=D$. Thus, by the definitions of $A$ and $C$ in $(II.c)$, $U=A^*+C^*$ is unitary
$$
 U^*U=D^2+B^2+DB+BD=(I-B)^2+B^2+(I-B)B+B(I-B)=I.
$$
Moreover, $(I.a)-(I.c)$ yield	
$$
A^* A +C^*C=U(D^*D+B^*B)U^*=U((I-B)^2+B^2)U^*=UU^*=I,
$$
which also proves that $B^{*} B +D^*D=I$. Furthermore,
$$
 A^*B +C^*D=UD^*B+UB^*D=U((I-B)B+B(I-B))=0.
$$
Similarly, $B^*A+D^*C=0$. Next, we show that  $DC=0$ and $BC=C$. From $(II.a)$ and $(II.b)$ we get
$$
	DC=(I-B)C=(I-B)BU^*=(B-B^2)U^{*}=0
$$
and, by the definition of $C$,
$$
	BC=BBU^{*}=B^{2}U^{*}=BU^{*}=C,
$$
which concludes the proof. \end{proof}

\section{Examples} \label{sec:examples}

This section  illustrates our results with several examples. In particular, for $N=1$, the examples point out the strength of the algorithm given by the conditions $(II)$
in Lemma \ref{lem:N_1}. This algorithm allows us to characterize all possible wavelet and multi-wavelet masks with support
on $[0,2]$ or $[0,3]$. In subsection \ref{sec:examples_N_2}, we show how to apply the result of Theorem \ref{th:main} for
construction of $F$ in \eqref{def:F} of degree $N=2$ with support on $[0,5]$.

\subsection{Wavelets and multi-wavelets supported on $[0,2]$ or on $[0,3]$} \label{sec:examples_N_1}

Several properties of $F$ in \eqref{def:F} are similar in the scalar ($K=M=1$) and full
rank ($K=M>1$) cases. The corresponding masks are characterized in subsection
\ref{sec:examples_N_1_full_rank}. The rank one case ($1=K<M$) is considered in subsection
\ref{sec:examples_N_1_rank_1}.

\subsubsection{Wavelets and full rank multi-wavelets} \label{sec:examples_N_1_full_rank}

We first consider the full rank $K=M$ matrix case, which includes the wavelet case $K=M=1$. Note first that the full rank
requirement in the case $K=M$ uniquely determines the unitary matrix $U$. In fact, since  $U=F_0^*+F_1^*$, the
first order sum rule/vanishing moments conditions \eqref{sum_rules1}-\eqref{eq:van_moment} are equivalent to
$$\left(\begin{array}{rr}
I_M & I_M \\
I_M & -I_M
\end{array}\right)
U^*=\sqrt{2}\, \left(\begin{array}{cc}
I_M & 0 \\
0 & W
\end{array}\right), \quad W^*W=I_M.
$$
Therefore,
$$
 U=\frac{\sqrt{2}}{2} \left(\begin{array}{rr}
   I_M & W \\
   I_M & -W
 \end{array}\right).
$$
By Lemma \ref{lem:N_1}, the masks $\pb$ and $\qb$ are, thus, determined by the choice of the projection $B$.
In the scalar case $K=M=1$, there are only three choices for $B$: identity or two one-parameter families
\begin{equation}
\label{eq:Bscalar}
 B_+=\left(\begin{array}{cc} b & \sqrt{b-b^2} \\  \sqrt{b-b^2} & 1-b
\end{array}\right) \quad \hbox{and} \quad   B_{-}=\left(\begin{array}{cc} b & - \sqrt{b-b^2} \\ - \sqrt{b-b^2} & 1-b
\end{array}\right), \quad b \in \R.
\end{equation}
Once a particular $B$ is chosen, set $F_0=A=(I-B)U^*$, $F_1=C=BU^*$. To recover Haar masks $\pb$ and $\qb$ choose $B=I_2$
or $b=1$.  To impose additional sum rules/vanishing moments \cite{Charina, Han, JetterPlonka}
for $k=1$
we solve for $b$
\begin{equation} \label{eq:additionalmoments}
\left(\begin{array}{rrrr}
0 & -1 & 2 & -3 \\
0 & 1 & 2 & 3
\end{array}\right)
\left(\begin{array}{c}
F_0\\F_1
\end{array}\right)=
\left(\begin{array}{c}
0\\0\\0\\0
\end{array}\right).
\end{equation}
This system yields a unique solution $b=\frac{3}{4}$, determining the Daubechies (D4) masks $\pb$ and $\qb$ with the supports
$[0, 3]$.

In the case $K=M=2$, there are several choices for the projection $B$.  If we look for the masks $\pb$ and $\qb$ with supports
$[0, 2]$, then the only possible projections $B$ are given by
\begin{equation}\label{eq:Bmatrix}
B=\left(\begin{array}{cc}
B_{\pm} & 0_2 \\
0_2 & 0_2
\end{array}\right),
\end{equation}
where the blocks $B_{\pm}$ are given in \eqref{eq:Bscalar} and $0_2$ are $2 \times 2$ zero blocks.
Note, however, that these choices of $B$ lead to \emph{essentially diagonal} matrix-valued masks $\pb$ and $\qb$ specified
in \cite{CCSAvignone}. Such essentially diagonal matrix-valued masks $\pb$ and $\qb$ are equivalent to some scalar masks $\pb_j$
and $\qb_j$, $j=1,2$, since $\pb$ and $\qb$ are jointly diagonalizable.
This means that there are only trivial full rank matrix-valued masks $\pb$ and $\qb$ supported on $[0,2]$.

If we consider $K=M=2$ and  look for the masks with supports $[0,3]$, then we retrieve e.g. all the full rank
families of masks $\pb$ and $\qb$ in \cite{CH}. For example, the ones in \cite[Table A.4]{CH} are obtained for the projections
$$
 B=\left( \begin {array}{cccc}  1-b_1 &0&0&\sqrt{b_1-b_1^2}
 \\ 0&1-b_2&\sqrt{b_2-b_2^2}&0
 \\ 0&\sqrt{b_2-b_2^2}&b_2&0
 \\ \sqrt{b_1-b_1^2}&0&0&b_1\end {array}
 \right),  \quad 0\le b_1,b_2\le 1.
$$
Whereas, the masks $\pb$ and $\qb$  in \cite[Table A.3]{CH} come from the projection
$$
\small{
 B= \left( \begin {array}{cccc} 1&0&0&0\\ \noalign{\medskip}0&\frac 14\,
 \left( b+1 \right)^{2}&\frac 14\,({b}^{2}-1) &-\frac {\sqrt 2}4\sqrt {-{b}^{2}+1}
 (b+1 ) \\
 0&\frac 14\,({b}^{2}-1)&\frac 14\,
\left( b-1 \right) ^{2}&-\frac {\sqrt2}4\,\sqrt {-{b}^{2}+1} \left( b-1\right) \\ 0&-\frac {\sqrt 2}4\sqrt {-{b}^{2}+1} ( b+1
) &-\frac {\sqrt 2}4\sqrt {-{b}^{2}+1} ( b-1 ) &\frac 12\,(-{b}^{2}+1)\end {array} \right),
}$$
with  $ |b|\le 1$.
As in the scalar case, the free parameters $b$, $b_1$ and $b_2$ are determined by imposing additional sum rules/vanishing moment conditions.
These conditions are similar to the ones in \eqref{eq:additionalmoments}, due to the nature of the full rank case:
$$
 \left(\begin{array}{rrrr} 0 & -1 & 2 & -3 \\ 0 & 1 & 2 & 3
 \end{array}\right)
 \quad \hbox{replaced by} \quad \left(\begin{array}{rrrr}
0_2 & -I_2 & 2I_2 & -3I_2 \\
0_2 & I_2 & 2I_2 & 3I_2
\end{array}\right).
$$

\subsubsection{$1$-rank orthogonal multi-wavelets} \label{sec:examples_N_1_rank_1}

In this subsection we relax the full rank requirement and consider the multi-wavelet (rank $1$) setting with $1=K<M=2$.
If we require the support of the masks $\pb$ and $\qb$ to be $[0, 2]$, then the projection $B$ is as in \eqref{eq:Bmatrix}.
To impose sum rule/vanishing moment conditions on the unitary matrix $U$, we consider, for some non-zero
$v=(v_1,v_2)^T \in \R^2$, the system
 \begin{equation}\label{eq:y}
 \left(\begin{array}{cccc} v_1 & v_2 & 0 & 0 \end{array}\right)\, U=
 \frac {\sqrt{2}}{2}\left(\begin{array}{cccc} v_1 & v_2 & v_1 & v_2 \end{array}\right).
 \end{equation}
 and
 \begin{equation}\label{eq:y1}
 \left(\begin{array}{cccc} v_1 & v_2 & v_1 & v_2 \end{array}\right)\, U^*=
  {\sqrt{2}}\left(\begin{array}{cccc} v_1 & v_2 & 0 & 0 \end{array}\right).
 \end{equation}
Note that, by \cite{Charina, Han}, we can restrict our attention w.l.g. to the case
$v=(1,0)^T$ (though one can allow for different $v \in \R^2$ to be able to reproduce other known constructions).
Since $v=\hat \phi(0)$, this happens for example under the assumption that the components of $\phi=(\phi_1, \phi_2)$ are
symmetric/antisymmetric, respectively, around the center of their support, see e.g. \cite{ChuiLian}.
In this case, the first row of $U$ can be  determined from
$$\left(\begin{array}{cccc} 1 & 0 & 0 & 0\end{array}\right)\, U=
\frac {\sqrt{2}}{2}\left(\begin{array}{cccc} 1 & 0 & 1 & 0\end{array}\right).
$$
To impose the symmetry/antisymmetry assumptions, we set the zero entry of the mask $\pb$ to be $p_0=S\, p_2\, S$ and its
first entry $p_1$ to be diagonal. Here we use
$S:=\left(\begin{array}{cc} 1 & 0\\ 0 & -1  \end{array}\right)$. Then $F_0$ and $F_1$ are diagonal matrices and
the matrix $U$, which depends only on one parameter, is one of the following matrices
$$V_1\left( \begin{array}{cc}
U_1 & U_2\\
-U_2 & U_1
\end{array}\right) V_2,
$$
where
$$
 U_1=\left(\begin{array}{cc} \frac {\sqrt{2}}2 & 0 \\ 0 & \ell \end{array}\right), \quad
 U_2=\left(\begin{array}{cc} \frac {\sqrt{2}}2 & 0 \\ 0 & \sqrt{1-\ell^2} \end{array}\right),  \quad \ell \in \R,
$$
and
$$
V_1\in \left\{  \left( \begin{array}{cc} I & O\\ O & \pm I  \end{array}\right),
\left( \begin{array}{cc} I & O\\ O & \pm S  \end{array}\right) \right\},\quad
V_2\in \left\{
\left( \begin{array}{cc} I & O\\ O &  I  \end{array}\right),
\left( \begin{array}{cc} I & O\\ O & S \end{array}\right) \right\}.
$$
The Chui-Lian multi-wavelets \cite{ChuiLian} correspond to the choice $\ell=-\frac {\sqrt{14}}4$ and $b=\frac 12$ in \eqref{eq:Bmatrix}.


The next example,  is related to a special type of multi-wavelet systems proposed in
\cite{LebrunVetterli}. By similar argument as the ones used in \cite{BCS,CLS}, the authors in \cite{LebrunVetterli} derive
proper pre-filters associated to any multi-wavelet basis. The construction is based on the requirement that the mask
$\pb$ preserves the constant data which make any pre-filtering step obsolete. Preservation of constants is equivalent
to the choice $v=(1,1)^T$ in \eqref{eq:y}-\eqref{eq:y1}. In order to reduce the degrees of freedom, we impose some
symmetry constraints on $\pb$ and $\qb$ (see \cite{LebrunVetterli}) directly on the matrix $U$.
Thus, we split $U$ into four symmetric blocks. One of such matrices $U$ (the other possibilities differ only by sign changes)
is given by
$$
 U=\left( \begin{array}{cccc}
\ell& \frac 12 -\ell & \frac {\sqrt2}4 (1+J_\ell^-)& \frac {\sqrt2}4 (1-J_\ell^-) \\
\frac 12 -\ell & \ell & \frac {\sqrt2}4 (1-J_\ell^-)& \frac {\sqrt2}4 (1+J_\ell^-) \\
-\frac 12 &-\frac 12 & \frac 12 & \frac 12 \\
\frac {2\sqrt{2}\ell +1}{2(8\ell^2-1)}J_\ell^+J_\ell^-
& -\frac {2\sqrt{2}\ell +1}{2(8\ell^2-1)}J_\ell^+J_\ell^-&
-\frac 12 J_\ell^+&\frac 12 J_\ell^+
\end{array}
\right)
$$
with $J^\pm_\ell=\sqrt{1\pm 8\ell^2\mp 4\sqrt{2}\ell}$.
The masks $\pb$ and $\qb$ in \cite{LebrunVetterli} correspond to $\ell=\frac {\sqrt {2} }8 \left( 2-\sqrt {7} \right) $
and to the value $b=1 $ in \eqref{eq:Bmatrix}.

\subsection{Wavelets with support $[0,5]$.} \label{sec:examples_N_2}

In this section, we consider the case $K=M=1$ and $N=2$ and apply  the method for determining
the masks of Daubechies (D6) given by Theorem \ref{th:main}.
By such theorem,  the unitary matrix
$$
 U^*=\left(\begin{array}{cc} F_0+F_2 & F_1 \\ F_1 & F_0+F_2\end{array} \right)
$$
contains already all the information about the unitary $F(z)$ we aim to determine. Imposing
the sum rules/vanishing moments of order $3$ on $U^*$ leads to
\begin{eqnarray*}
  F_0&=& \sqrt2\left(\begin{array}{cc}
  	p_0&q_0\\
  	p_1&q_1 \end{array} \right), \\ \\
	F_1&=& \sqrt{2} \left(\begin{array}{cc}
		\frac 18 -4p_0+2p_1 & \frac 18-4q_0-2q_1 \\
  \frac 38 -2p_0& -\frac 38+2q_0  \end{array} \right), \\  \\
	F_2&=& \sqrt{2}\left(\begin{array}{cc}
		\frac 38 +3p_0-2p_1 & \frac 38 +3q_0+2q_1\\
		\frac 18 +2p_0-p_1 & -\frac 18 -2q_0-q_1
		 \end{array} \right), \quad p_0,p_1,q_0,q_1 \in \R.
\end{eqnarray*}
The condition $U^*U=I$ reduces the $4$ parameters to one, $t=q_0$. This requires us to solve
$4$ quadratic equations in $4$ unknowns. We obtain four possible solutions depending on $t$.
We present only one of them that corresponds to the Daubechies wavelets $D6$. The others are the same up to a sign change.
$$
 p_0=\frac 1{16}(1 + a_t), \,
p_1=\frac 18 (2-8t + a_t), \,
q_1=-\frac 1{16} (1+8t-a_t),
$$
where $a_t=\sqrt{-255t^2+32t+7}$. The parameter $t$ is determined by solving one equation
with the radical $a_t$ and yields $t= \frac 1{32}+\frac1{32}\sqrt{10}\mp\frac 1{32}\sqrt{5+2\sqrt{10}}$.

\section{Potapov-Blaschke factorizations: scalar case} \label{examples:potapov}

In this subsection, we consider only the cases $K=M=1$ and $N=1,2$. We think the corresponding examples
are sufficient for the comparison of our results with the ones in \cite{AlpayJorgensen}. The case of $N=1$ is of special interest as it directly establishes a link between our results and the results in \cite{AlpayJorgensen}.

It has been observed already in \cite{Potapov} , see also e.g. \cite[Theorem 4.3]{Gohberg}, that any trigonometric polynomial of degree $N$,
which is unitary on the unit circle, possesses a factorization into so-called Blaschke-Potapov factors. These factorizations were
applied for constructions of finite impulse response filters in \cite{AlpayJorgensen}. In the case $N=1$, the result of
Lemma \ref{lem:N_1} also leads us to Blaschke-Potapov factors. Indeed in this case $A=DU^*=(I-B)U^*$ and $C=BU^*$ and hence,
$$
 F(\xi)=A+Cz=DU^* + BU^* \xi = (I-B)U^* + BU^* \xi = (I-B+B\xi)U^*, \quad |\xi|=1.
$$
For factorizations of higher degree $F$ into Blaschke-Potapov factors we use the matrices $B$ and $U$ constructed via the algorithm in
$(II)$ Lemma \ref{lem:N_1}. In general, any unitary $F(\xi) \in \C^{2M \times 2M}$, $|\xi|=1$, of degree $N$ possesses a factorization
$$
 F(\xi)=\prod_{j=1}^N =(I-B_j+B_j \xi)U_j, \quad U_j^*U_j=I, \quad j=1, \ldots, N,
$$
where $B_j$ are some rank-1 projections.

%
%

For $N=2$, the Daubechies (D6) scaling and wavelet masks are obtained by considering
$$
 F(\xi)=(I-B_1+B_1 \xi) (I-B_2 +B_2 \xi)\,U^*
$$
for some $B_1$ and $B_2$ in \eqref{eq:Bscalar}. To determine the corresponding parameters $b_1$ and $b_2$,
as mentioned above, we determine the corresponding $F_j$ and, then, impose further the sum rules/vanishing
moments of order $2$
$$\sum_{j=0}^{2}
 \left(\begin{array}{cc}
 (2j)^k & -(2j+1)^k \\
 (2j)^k & (2j+1)^k
 \end{array}\right)F_j=\left(\begin{array}{cc} 0 & * \\ * & 0
 \end{array}\right), \quad k=1,2,
$$
where $*$ symbolizes the matrix entries that do not contribute to our computations.
We get
$$
b_1=\frac 54-\frac 18\,\sqrt {10}, \quad b_2=\frac 18\,\sqrt {10},
$$
or more explicitly
\begin{eqnarray*}
B_1&=&\left( \begin {array}{cc} 5/4-1/8\,\sqrt {10}&-1/8\,\sqrt {-30+12\,
	\sqrt {10}}\\ \noalign{\medskip}-1/8\,\sqrt {-30+12\,\sqrt {10}}&-1/4+
1/8\,\sqrt {10}\end {array} \right),  \\
B_2&=&\left( \begin {array}{cc} 1/8\,\sqrt {10}&-1/8\,\sqrt {-10+8\,\sqrt {
		10}}\\ \noalign{\medskip}-1/8\,\sqrt {-10+8\,\sqrt {10}}&1-1/8\,\sqrt
{10}\end {array} \right).
\end{eqnarray*}

To obtain $b_1$ and $b_2$, we, additionally, need to solve one quadratic and one equation with the radicals.
Thus, the computational effort is exactly the same as in Section \ref{sec:examples_N_2}.

\section*{Conclusion}\label{sec:conclusion}
In this paper we have shown that results from system theory provide complete characterization of all orthogonal  (multi)wavelet filters. This has been achieved by explicitly determining the structure of the $ABCD$ matrix from which an algorithm for multi-wavelet construction can be derived.
The aim of the paper was not to propose new classes of multi-wavelets, but rather to provide a unifying framework for all the different constructions proposed in literature. 

Future work includes the non-straightforward generalization  to the bivariate case and the use of our results for the explicit construction of new classes of  matrix wavelet filters satisfying more general properties, for example the exponential rather than polynomial vanishing moment property, or customized according to the problem.

\section*{Acknowledgements}
Maria Charina was sponsored by the Austrian Science Foundation (FWF) grant P28287-N35.  Part of the research was carried out  during a visit of the first author at the University of Reggio Calabria supported by GNCS-INdAM.



\begin{thebibliography}{99}

\bibitem{AlpayJorgensen} D. Alpay, P. Jorgensen, I. Lewkowicz, Characterization of rectangular
(para-)unitary rational functions, $arXiv:1410.0283$.

\bibitem{BCLazz} S. Bacchelli, M. Cotronei, D. Lazzaro, An algebraic construction of $k$-balanced multi-wavelets via the lifting scheme,
{\sl Numer. Algorithms} 23 (2000) 329-356.

\bibitem{BCS1} S. Bacchelli, M. Cotronei, T. Sauer, Wavelets for multichannel signals, {\sl Adv. Appl. Math.} 29 (2002) 581-598.

\bibitem{BCS} S. Bacchelli, M. Cotronei, T. Sauer, Multifilters with and without prefilters, {\sl BIT} 42:2 (2002) 231-261.

\bibitem{BT} J. A. Ball, T. T. Trent,  Unitary colligations, reproducing kernel Hilbert spaces, and Nevanlinna-Pick interpolation in several variables, {\sl J. Funct. Anal.} 157 (1998) 1-61.

\bibitem{Charina} M. Charina, Vector multivariate subdivision schemes:
comparison of spectral methods for their regularity analysis, {\sl App. Comp. Harm.
Anal.} 32 (2012) 86-108.

\bibitem{ChuiLian} C.K. Chui, J. Lian, A study of orthonormal multi-wavelets,
{\sl Appl. Num. Math.} 20:3 (1996) 273-298.

\bibitem{CollelaHeil} D.~Colella, C.~Heil, Characterizations of scaling functions: Continuous
solutions, {\sl SIAM J. Matrix Anal. Appl.}  15 (1994) 469-518.

\bibitem{Conti_Cotronei} C. Conti and M. Cotronei,
Construction of multichannel wavelets from orthogonal full rank refinable function, Wavelets and multiscale analysis,
J. Cohen and A.I. Zayed (eds), Birkhauser, (2011) 109-130.

\bibitem{CCSAvignone} C. Conti, M. Cotronei, T. Sauer, Full rank interpolatory
vector subdivision schemes, in Curve and Surface Fitting: Avignon 2006,
A. Cohen, J.L. Merrien and L. Schumaker (eds.), 71 (2007) 71-80.

\bibitem{CCS_2008} C. Conti, M. Cotronei, T. Sauer, Full rank positive matrix symbols: interpolation and orthogonality,
{\sl BIT}, 48 (2008) 5-27.

\bibitem{CCS_2010} C. Conti, M. Cotronei, T. Sauer, Full rank interpolatory subdivision schemes: Kronecker, filters
and multiresolution, {\sl J. Comput. Appl. Math.} 233:7 (2010) 1649-1659.

\bibitem{CLS} M. Cotronei, L. Lo Cascio, T. Sauer, Multifilters and prefilters: Uniqueness and algorithmic aspects,
{\sl J. Comput. Appl. Math.} 221 (2008) 346-354.

\bibitem{CH} M. Cotronei,  M. Holschneider, Partial parameterization of orthogonal wavelet matrix filters,
{\sl J. Comput. Appl. Math.} 243 (2013) 113-125.

\bibitem{CS} M. Cotronei, T. Sauer, Full rank filters and polynomial reproduction, \textsl{Comm. Pure Appl. Anal.} 6 (2007) 667-687.

\bibitem{Daub_paper} I. Daubechies, Orthonormal bases of compactly supported wavelets, {\sl Comm. Pure Appl. Math.} 41-7 (1988), 909--996.

\bibitem{Daub_ten_lectures} I. Daubechies, \emph{Ten lectures on wavelets}, in: CBMS Conf.
Series in Appl. Math. 61 (1992).

\bibitem{DL1992} I.~Daubechies, J.~C.~Lagarias, Sets of matrices all
infinite products of which converge, {\sl Linear Algebra Appl.} 162 (1992), 227-263.

\bibitem{GHM} J. S. Geronimo, D. P. Hardin, P. R. Massopust, Fractal functions and wavelet expansions
based on several scaling functions, {\sl J. Approx. Theory} 78 (1994) 373-401.

\bibitem{Gohberg} I. Gohberg, M.A. Kaashoek, A. C. M. Ran, Factorizations of and extensions to $J$-unitary rational matrix functions
on the unit circle, {\sl Integr. Equat. Oper. Th.} 15 (1992) 262-300.

\bibitem{Han} B. Han, Vector cascade algorithm and refinable function vectors in
Sobolev spaces, {\sl J. Approx. Theory} 124 (2003) 44-88.

\bibitem{HeilCol} C. Heil, D. Colella, Matrix refinement equations: existence and uniqueness,
{\sl J. Fourier Anal. Appl.} 2 (1996)  363-377.

\bibitem{JetterPlonka} K.~Jetter, G.~Plonka, A survey on $L_2$-approximation order from
shift-invariant spaces, Multivariate Approximation and
Applications, N. Dyn, D. Leviatan, D. Levin, A. Pinkus (eds.),
Cambridge University Press, 2001, 73-111.

\bibitem{Keinert} F. Keinert, Wavelets and Multiwavelets, Chapman \& Hall/CRC, (2004).

\bibitem{LebrunVetterli} J. Lebrun, M. Vetterli, High-order balanced multiwavelets: theory,
factorization, and design, {\sl IEEE Trans. Signal Process.}, 49:9 (2001) 1918-1930.

\bibitem{MS} C. A. Micchelli, T. Sauer, Regularity of multiwavelets, {\sl Adv. Comput. Math.} 7
(1997) 455-545.

\bibitem{PS} G. Plonka, V. Strela, Construction of multiscaling functions
with approximation and symmetry, {\sl SIAM J. Math. Anal.} 29 (1998) 481-510.

\bibitem{Potapov} V. P. Potapov, Multiplicative structure of $J$-nonexpansive matrix functions, {\sl Trudy Mosk. Math. Ob.}
4 (1955) 125-236 [Russian]; English translation: AMS Translations, Series 2 (1960) 131-243.

\bibitem{RS95} A.~Ron, Z.~Shen, Affine systems in $L_2(\R^d)$: the
analysis of the analysis operator, {\sl J.  Funct. Anal.} 148 (1997) 408-447.

\bibitem{SS94} G. Strang, V. Strela, Orthogonal multiwavelets with vanishing moments,
{\sl J. Optical Eng.} 33 (1994) 2104-2107.

\bibitem{Vetterli}
M. Vetterli, J. Kova\v{c}evi\'c, Wavelets and subband coding, Prentice Hall PTR (1995).

\end{thebibliography}
\end{document}